\newcommand{\Prob}{\mathbb{P}}
\newcommand{\mumin}{\mu_{\textnormal{min}}^c}
\newcommand{\Emu}{E_c[\mu]}
\newcommand{\dEmu}{\frac{\delta E_c}{\delta \mu}}
\newcommand{\rhoz}{\rho^0}
\newcommand{\dmut}{d\mu_{\textnormal{min}}^c(x)/dx}
\newcommand{\dmu}{\frac{d\mu_{\textnormal{min}}^c}{dx}(x)}
\newcommand{\Lc}{ \overline{L}_c^{\Gamma}(z) }
\newcommand{\LcI}{ \overline{L}_c^{I} }
\newcommand{\dd}{{\rm d}}
\newcommand{\wq}{\widehat{w}_{2N}}
\newcommand{\Kh}{\widehat{K}_{2N,2k}}
\newcommand{\Rh}{\widehat{R}^{(2N,2k)}}
\newtheorem{theorem}{Theorem}
\newtheorem{lemma}{Lemma}
\newtheorem{prop}[lemma]{Proposition}
\newtheorem{fact}[lemma]{Fact}
\newtheorem{definition}[lemma]{Definition}
\numberwithin{lemma}{section}
\numberwithin{theorem}{section}
\numberwithin{equation}{section}
\title{An ensemble related to discrete orthogonal polynomials and its application to tilings of a half-hexagon.}
\author{Uwe~Schwerdtfeger\\
\\
Fakult\"at f\"ur Mathematik\\
Technische Universit\"at Chemnitz\\
09107 Chemnitz, Germany\\
}
\begin{document}
\maketitle

\begin{abstract}
We study a family of discrete probability measures that may be used to model particle configurations on a set of discrete nodes in presence of an impenetrable wall. The correlation functions are shown to be determinantal and can be expressed in terms of discrete orthogonal polynomials. We make strong use of the results of the monograph \cite{BKMM07} to explain the asymptotic behaviour of these ensembles. Our general results are applied to random tilings of the half-hexagon, a model introduced in \cite{ForNor09}. In regions away from the bottom boundary the model exhibits the same asymptotic behaviour as tilings of the full hexagon, including an ``arctic half-ellipse.'' Close to that boundary, however, the statistics are different.

\end{abstract}

\section{Introduction}
In this article we are concerned with discrete probability distributions on particle configurations on a discrete set of nodes. Given a positive weight function $w_N$ defined on a set of nodes $Y_N=\{y_{N,0}<\ldots < y_{N,N-1}\}$ contained in an interval $(0,b],$ we define 
$DOPE^{\textnormal{sym}}(N,k)=DOPE^{\textnormal{sym}}(N,k,w_N)$ to be the ensemble of $k$-tuples
$(x_1,\ldots,x_k)\in Y_N^k,$ $x_1<\ldots<x_k,$ equipped with the probability distribution 
\begin{equation}\label{dopesym}
p_{\textnormal{sym}}^{(N,k)}(x_1,x_2,\ldots,x_{k})=\frac{1}{Z_{N,k}^{\textnormal{sym}}}\prod_{i=1}^{k}x_i^2w_{N}(x_i)\prod_{1\leq i<j\leq k}\left(x_j^2-x_i^2\right)^2.
\end{equation}
$Z_{N,k}^{\textnormal{sym}}$ is a normalisation constant such that the sum over all configurations equals one. The name reflects the close relation to the following well-studied \emph{discrete orthogonal polynomial ensemble} $DOPE(2N,2k)=DOPE(2N,2k,\wq).$ Consider the set of nodes $X_{2N}=-Y_{N}\cup Y_N$ and the symmetric extension $\wq$ of $w_N$ 
\begin{equation}\label{def:X2Nwhat}
 X_{2N}=-Y_{N}\cup Y_N,\quad \wq(x)=\begin{cases}w_N(x),\;x\in Y_N,\\w_N(-x)\;x\in -Y_N.\end{cases}
\end{equation}
Then $DOPE(2N,2k)$ is the probability distribution on the set of $2k$-tuples $(x_1,\ldots,x_{2k})\in X_{2N}^{2k}$ with $x_1<x_2<\ldots<x_{2k}$ ($k\leq N$), given by
\begin{equation}\label{dope}
p^{(2N,2k)}(x_1,x_2,\ldots,x_{2k})=\frac{1}{Z_{2N,2k}}\prod_{i=1}^{2k}\wq(x_i)\prod_{1\leq i<j\leq 2k}(x_j-x_i)^2,
\end{equation}
and $Z_{2N,2k}$ is again a normalisation constant. Both probability measures $p^{(2N,2k)}$ and $p_{\textnormal{sym}}^{(N,k)}$ model repulsive particles since the Vandermonde type products penalise close and coincident particles. The additional feature of the measure $p_{\textnormal{sym}}^{(N,k)}$ is a repulsive wall at $0,$ modelled by the factors $x_i^2$ penalising particles near $0.$ The ensemble \ref{dope} has been studied in depth under the assumptions in Section \ref{analyticassumptions} in the monograph \cite{BKMM07}. The purpose of this article is to apply and extend these results to the ensemble \ref{dopesym}.

By $R^{(N,k)}_m$ and $\widehat{R}_m^{(2N,2k)}$ we denote the $m$-point correlation functions for $DOPE^{\textnormal{sym}}(N,k)$ and $DOPE(2N,2k),$ respectively, i.e.
\begin{equation}\label{def:mpoint}
R^{(N,k)}_m(x_1,\ldots,x_m)=\Prob(\text{particles at each of the sites }x_1,\ldots,x_m),
\end{equation}
analogously for $\widehat{R}^{(2N,2k)}_m.$ It is well known that $\Rh_m$ is determinantal, 
\begin{equation}
\Rh_m(x_1,\ldots,x_m)=\det\left( \Kh\left(x_i,x_j\right)  \right)_{i,j=1,\ldots,m},
\end{equation}
the kernel $\Kh$ involving the eponymous orthogonal polynomials.
$R^{(N,k)}_m$ turns out to be determinantal as well with a kernel 
\[
K_{N,k}(x,y)=\widehat{K}_{2N,2k}(x,y)-\widehat{K}_{2N,2k}(x,-y),\;x,y\in Y_N.
\]
A very short summary of our results is: under the assumptions of Section \ref{analyticassumptions} the contribution of $-\widehat{K}_{2N,2k}(x,-y)$ is negligible unless one considers nodes $x,y$ at $O(1/N)$ distance from $0.$ Hence, for positive nodes bounded away from 0,  $\widehat{R}^{(2N,2k)}_m$ and $R^{(N,k)}_m$ are asymptotically indistinguishable, the approximating correlation kernels are the sine and Airy kernels (cf. eqs. \eqref{def:sinekernel} and \eqref{def:Airykernel}). For a suitable choice of nodes at $O(1/N)$ distance to 0, $R^{(N,k)}_m$ exhibits a different behaviour governed by a modified sine kernel
\begin{equation*}
S^0(\xi,\eta):=\frac{\sin(\pi(\xi-\eta))}{\pi(\xi-\eta)}-\frac{\sin(\pi(\xi+\eta))}{\pi(\xi+\eta)}.
\end{equation*}
The former kernels are well-known from the theory of random matrices, they describe the bulk and edge correlations of the spectrum of a large GUE (Gaussian Unitary Ensemble) matrix, respectively \cite{Forrester08,Mehta04}. The latter kernel occurs in a study of non-intersecting Brownian excursions \cite{TraWid07} where it describes the joint distributions of the excursions at a given point in time. Our main application may be viewed as the discrete time analogue of that model.

\medskip

\noindent
\textbf{Outline of the paper:} In Section \ref{sec:halfhex} we introduce our main application, namely random tilings of the half-hexagon. In Section \ref{correlationfunctions} we show that $DOPE^{\textnormal{sym}}(N,k)$ has determinantal correlation functions and its relation to $DOPE(2N,2k)$. In Section \ref{analyticassumptions} we give all the necessary assumptions and definitions to state our general results in the subsequent Section \ref{statements}. In Section \ref{proofs} we prove the additional estimates required to apply the results from \cite{BKMM07}. In Section \ref{halfhexproofs} we prove the results from Section \ref{sec:halfhex}. 

\section{Random tilings of the half-hexagon}\label{sec:halfhex}

In this section we outline the the combinatorial model that inspired our studies. The $(Q,R,S)$\emph{-hexagon} is a hexagon with integer side lengths $Q,$ $R,$ $S,$ $Q,$ $R,$ $S$ and every angle equal to $120^\circ.$ We study tilings thereof with $60^\circ$ unit rhombi referred to as \emph{lozenges} or simply \emph{tiles}. The hexagon is filled without gaps and overlap, and no tile juts out beyond the boundary (``fixed boundary conditions"). The tiles occur in three different species (orientations) referred to as \emph{up-, vertical} and \emph{down-}tiles, see figure \ref{themodels}.  As we are interested in a certain symmetry class we restrict to hexagons with $Q=2k$ and $R=S.$
To quantify things, we fix an ON-coordinate system and look at (symmetric) tilings of the $(2k,R,R)$-hexagon with corners $(\pm \sqrt{3}R/2,\pm k)$ and $(0,\pm(k+R/2).$ The corners are numbered in counterclockwise order starting with $P_1=(- \sqrt{3}R/2,-k).$ If we focus upon the symmetry class of tilings w.r.t. the reflection in the $x$-axis, we can throw away the vertical tiles on the $x$-axis enforced by the symmetry constraint and the part below the $x$-axis to obtain a tiling of the so-called $(k,R)$\emph{-half-hexagon}, a model studied in \cite{ForNor09}.

Recall that tilings of the full $(2k,R,R)$-hexagon map bijectively to families of $2k$ non-intersecting paths on the triangular point lattice 
\[
L=\left\{\left(\frac{-\sqrt{3}}{2}R,\frac{1}{2}\right)+q\left(\frac{\sqrt{3}}{2},\frac{1}{2}\right)+r\left(\frac{\sqrt{3}}{2},-\frac{1}{2}\right),\; q,r\in\mathbb{Z} \right\}
\]
with starting points in the set $S=S^+\cup S^-$ and end points in $E=E^+ \cup E^-,$ where 
\[
S^+=\left\{ \left(-\frac{\sqrt{3}}{2}R,i+\frac{1}{2}\right),\;i=0\ldots k-1 \right\},\;E^+=\left\{ \left(\frac{\sqrt{3}}{2}R,i+\frac{1}{2}\right),\;i=0\ldots k-1 \right\},
\]
and $S^-$ (resp. $E^-$) denotes the reflection of $S^+$ (resp. $E^+$) in the $x$-axis. Admissible steps are $(x,y)\rightarrow(x+\sqrt{3}/2,y\pm1/2).$ To see this, connect in each up- and in each down-tile the midpoints of the vertical sides by a straight line segment (the decoration depicted in figure \ref{themodels}). 

\begin{figure}[t]
\begin{center}
\includegraphics[height=76mm,width=147mm]{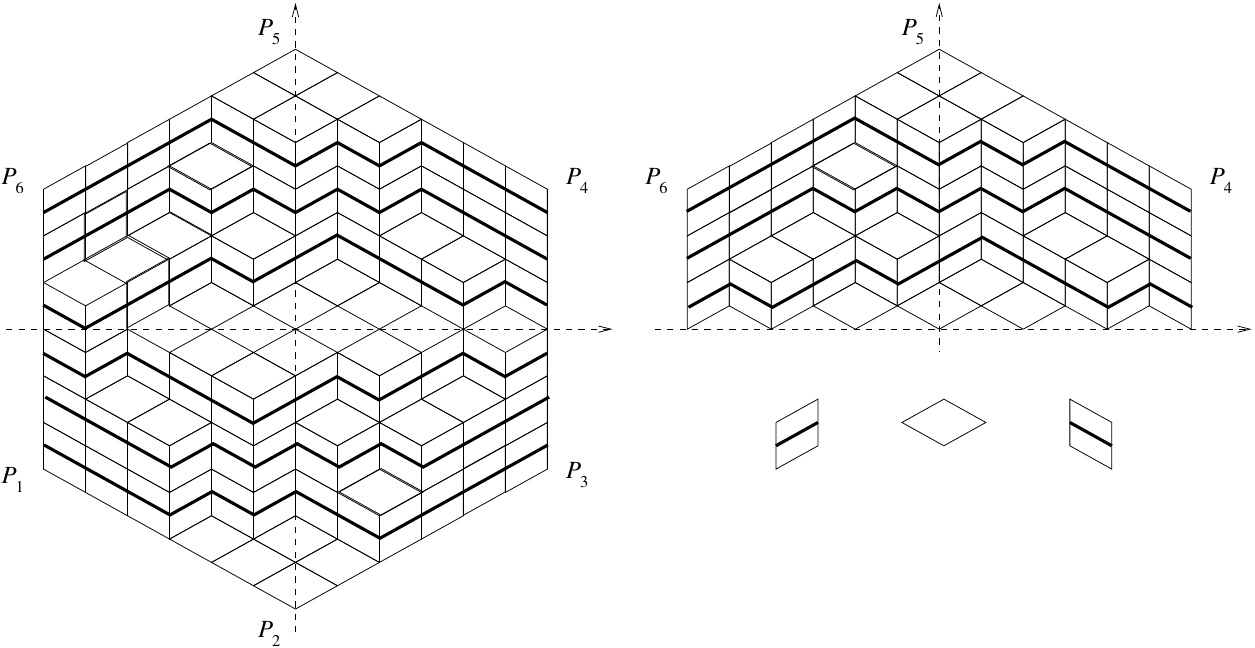}
\caption{\small{Tiling of a $(2k,R,R)$-hexagon (without symmetry), tiling of a $(k,R)$-half-hexagon, up-, vertical  and down-tiles}
}\label{themodels}
\end{center}
\end{figure}
In the same fashion tilings of the half-hexagon are mapped to families of $k$ non-intersecting paths with starting points in $S^+$ and end points in $E^+$ which \emph{do not touch the $x$-axis}. Denote by $L_m,\;m=0,\ldots,2R$ the set of numbers such that
$\{(-R+m)\sqrt{3}/2\}\times L_m$ is the intersection of the vertical line $x=(-R+m)\sqrt{3}/2$ with the lattice $L$ and the $(2k,R,R)$-hexagon. Note that by symmetry $L_m=L_{2R-m}$ and, for $m\leq R,$ $\left |L_m \right |=2k+m.$ Then $\{(-R+m)\sqrt{3}/2\}\times L_m$ is the set of possible points where a family of lattice paths corresponding to a tiling can intersect a vertical line after $m$ steps. Denote by $L_m^+$ the set of positive elements of $L_m.$ Now 
\begin{prop}[Theorem 4.1 in \cite{Johansson02}, Lemma 2.2 in \cite{ForNor09}]\label{hexagondistributions} Consider the sets of families of $2k$ non-intersecting lattice paths with starting points in $S$ and end points in $E$ (tilings of the $(2k,R,R)$-hexagon) and of families of $k$ such lattice paths with starting points in $S^+$ and end points in  $E^+$ not touching the $x$-axis ($(k,R)$-half-hexagon) to be equipped with their respective uniform distributions.
\begin{enumerate}
\item Let $x_1<x_2<\ldots<x_{2k},$ $x_i\in L_m$ for $i=1,\ldots, 2k.$ Then the probability of a family of non-intersecting lattice paths to intersect the vertical line $x=(-R+m)\sqrt{3}/2$ at ordinates $x_1,\ldots,x_{2k}$ is equal to
\begin{equation}\label{fullhexagon}
P_{m}(x_1,x_2,\ldots,x_{2k})=\frac{1}{Z_{m}}\prod_{i=1}^{2k}w(x_i)\prod_{1\leq i<j\leq 2k}(x_j-x_i)^2.
\end{equation}
\item Let $x_1<x_2<\ldots<x_{k},$ $x_i \in L_m^+$ for $i=1,\ldots, k.$ Then the probability of a family of $k$ non-intersecting lattice paths not touching the $x$-axis to intersect the vertical line $x=(-R+m)\sqrt{3}/2$ at ordinates $x_1,\ldots,x_{k}$ is equal to
\begin{equation}\label{halfhexagon}
P_{m}^\textnormal{sym}(x_1,x_2,\ldots,x_{k})=\frac{1}{Z_{m}^\textnormal{sym}}\prod_{i=1}^{k}x_i^2w(x_i)\prod_{1\leq i<j\leq k}\left(x_j^2-x_i^2\right)^2.
\end{equation}
\end{enumerate}
In both cases the weight function $w$ is \emph{even} and it is equal to
\begin{equation}\label{weighthexagon}
\begin{split}
w(z)&=\frac{1}{ (m/2+k-1/2 - z)!(R-m/2+k-1/2 - z)! }\\
& \times \frac{1}{ (m/2+k-1/2 + z)!(R-m/2+k-1/2 + z)! },\\
\end{split}
\end{equation}
$Z_{m}$ and $Z_{m}^\textnormal{sym}$ are normalisation constants.
\end{prop}
\noindent
The ensembles \eqref{halfhexagon} and \eqref{fullhexagon} are of $DOPE^{\textnormal{sym}}(N,k)$ and  $DOPE(2N,2k)$ types (with a suitable choice of $N$), respectively, and since $\tilde{w}(z)$ is symmetric they are related in precisely the fashion outlined above. Note that every point in $\{(-R+m)\sqrt{3}/2\}\times L_m$ where none of the paths intersect is a centre point of a vertical tile. So the ``particle-hole-dual" ensemble of \eqref{fullhexagon} (which is also of $DOPE$ type) describes the distribution of vertical tiles along the line $x= (-R+m)\sqrt{3}/2.$ In \cite{ForNor09} the joint distribution of vertical tiles on \emph{all} lines $L_1,\ldots,L_m$ for some $m<R$ is considered.  

We consider asymptotics of hexagon tilings when the side lengths grow proportionally, $R/k\longrightarrow\lambda>0$ or, after a rescaling by $1/k,$ tilings of a half hexagon of sidelenghts $1$ and $\lambda$ with ever finer tiles. The corners of the limiting half-hexagon are at $(\pm \sqrt{3}\lambda/2,0)$, $(\pm \sqrt{3}\lambda/2,1)$ and $(0,1+\lambda/2).$ Consider the vertical line $L_\tau$ with ordinate $\tau\in (\sqrt{3}\lambda/2,0]$ intersecting the rescaled $(k,R)$-half-hexagon. 

\smallskip
\noindent
\textbf{Remark:} In the following we make the obvious abuse of notation: when speaking of nodes etc. ``on $L_\tau$" we mean of course a line of the rescaled lattice approximating $L_\tau.$

\smallskip
\noindent
The abscissas of the nodes (possible intersection points) lie in the interval $[0,c(\tau)^{-1}]$, where $c(\tau)^{-1}$  is the length of the portion of $L_\tau$ inside the half-hexagon. Hence $c=c(\tau)$ is the asymptotic ratio $k/N_\tau$ of intersection points and nodes on $L_\tau.$ 
Denote by $\beta=\beta(\tau)>0$ the abscissa of intersection point of $L_\tau$ with the inscribed half-ellipse and by $(\tau_0,\beta(\tau_0))$ the tangent point of the half-hexagon and its inscribed half-ellipse, i.e.
\begin{equation}
c=\frac{1}{\tau/\sqrt{3}+1+\lambda/2},\quad\beta=\sqrt{1+\lambda}\sqrt{1-\left(\frac{2\tau}{\lambda \sqrt{3}}\right)^2 },\quad\tau_0=-\frac{\sqrt{3}\lambda^2}{2(2+\lambda)}.
\end{equation}
With some further notation
\[
T(x)=\sqrt{\frac{\beta-x}{\beta+x} } ,\quad k_1=\sqrt{  \frac{1-2c\,\tau/\sqrt{3}   +c\beta}{1-2c\,\tau/\sqrt{3}    -c\beta}   },\quad k_2=\sqrt{  \frac{1 +c\beta}{1 -c\beta}   }
\]
we define a function $H(\tau,x)$ on the interval $[0,c(\tau)^{-1}]$  for every $\tau\in (\sqrt{3}\lambda/2,0]\setminus\{\tau_0\}$ by
\begin{equation*}
H(\tau,x)=\begin{cases}
\begin{aligned}
\frac{1}{2}-\frac{ 1 }{ 2\pi } &\left(\arctan(k_1^{-1}T(x))-\arctan(k_1T(x))\right.\\ &+ \left. \arctan(k_2T(x))-\arctan(k_2^{-1}T(x))\right.\\
 \end{aligned}\; &\text{if }\tau<\tau_0,\; x\in [0,\beta),\\
1/2\; &\text{if }\tau<\tau_0,\; x \in [ \beta,c^{-1}],\\
\begin{aligned}
\frac{ 1 }{ 2\pi } &\left(  \arctan(k_1T(x))-\arctan(k_1^{-1}T(x))\right.\\
 &\left.+\arctan(k_2T(x))-\arctan(k_2^{-1}T(x))\right) 
\end{aligned} &\text{if } \tau>\tau_0,x\in [0,\beta),\\
0\;&\text{if }\tau>\tau_0,\; x \in [ \beta,c^{-1}].\\
\end{cases}
\end{equation*}
$H(\tau,\cdot)$ is the density function of the associated equilibrium measure, cf. Section \ref{equilibrium}. The following theorem shows that away from the bottom boundary, the particle statistics of the half-hexagon and full hexagon ensembles are asymptotically indistinguishable. 
\begin{theorem}\label{theo:hhbulkandrightedge}
Denote by $R^{(N_\tau,k)}_m$ the $m$-point correlation function of intersection points on $L_\tau.$ Then we have, with $\dmut=H(\tau,x)$ and $\rhoz(x)\equiv c/2:$
\begin{enumerate}
\item For a fixed $x$ in the interval $[0,\beta)$ the estimates \eqref{bandest} and \eqref{bandzero} of Theorem \ref{correlation:band} hold accordingly.
\item For $\tau>\tau_0$ the ``void" cases of Theorems \ref{correlation:edge} and \ref{Theorem39} hold, and for $\tau<\tau_0$ the respective ``saturated region" cases hold. 
\item For any closed interval $J\subset (\beta, c^{-1}]$ the first estimate of Theorem \ref{correlation:gap} holds for $\tau>\tau_0$ and the second estimate holds for $\tau<\tau_0.$
\end{enumerate}
In particular, if $\xi_k$ is a sequence of nodes with $\xi_k\longrightarrow x\in(0,c^{-1}]$ the one-point correlation function $R^{(N_\tau,k)}_1(\xi_k)$ converges pointwise to $2H(\tau,x).$ The rate of convergence is $O(1/k)$ for $x<\beta$ and uniformly exponentially fast in any closed subset $J\subset (\beta,c^{-1}].$ 
\end{theorem}

\noindent
\textbf{Remark:} The assertion on the one-point correlation function yields that, for $\tau<\tau_0,$ above the inscribed ellipse the line $L_\tau$ is fully packed with intersection points (and hence with the blue ``up-tiles"), uniformly with probability exponentially close to one. If $\tau_0<\tau\leq 0,$ the part of $L_\tau$ above the ellipse is devoid of intersection points and hence fully packed with vertical (red) tiles with probability exponentially close to one. Close to a point with ordinate $>0$ in the interior of the ellipse, the one-point correlation function takes a value strictly between 0 and 1 and hence both intersection points and vertical tiles occur with positive probability.  This reflects the ``arctic phenomenon" present in the large tiling of the half-hexagon depicted in Figure \ref{arcticsemicircle}, namely the tiling being highly ordered at the corners and unordered in the interior, with a sharp transition along the inscribed ellipse.
\begin{figure}[t]
\begin{center}
\includegraphics[height=90 mm,width= 150 mm]{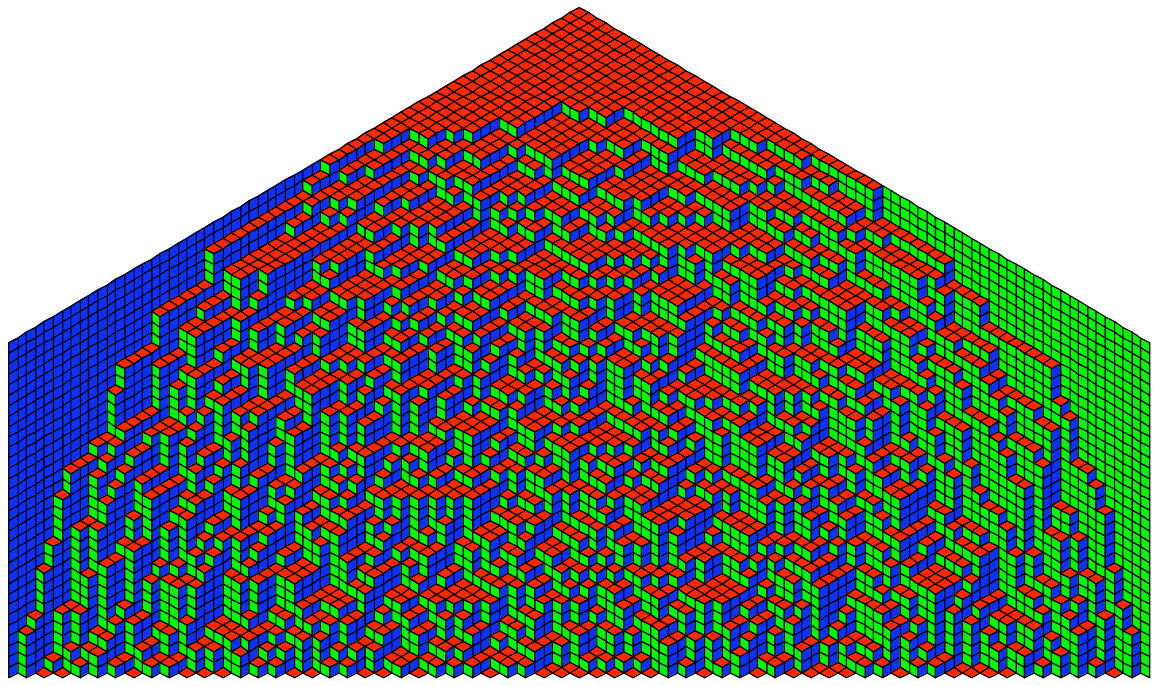}
\caption{\small Arctic phenomenon in a $(32,64)$-half-hexagon}\label{arcticsemicircle}
\end{center}
\end{figure}

\medskip

The novel feature of the half-hexagon tilings is the behaviour at the bottom boundary which is asymptotically governed by the kernel
\[
 {\cal S}^{0}_{ij}=\frac{\sin\left(2H(\tau,0)  \pi(i-j) \right)}{\pi(i-j)} -\frac{\sin\left(2H(\tau,0) \pi(i+j) \right)}{\pi(i+j)}.
\]

\begin{theorem}\label{leftmostparticlehex}
Let the nodes on $L_\tau$ be the set $Y_{N_\tau}=\{y_{N_\tau,j}=(2j+1)/2k,\;j=0,\ldots,N_\tau  \},$ i.e. $L_\tau$ is met by the lattice path family after an even number of steps.  For a fixed set $\mathbb{B}=\{j_1<j_2<\ldots<j_l\}$ of non-negative integers consider the subset $B=\{y_{N,j_1},\ldots,y_{N,j_l} \}\subset Y_{N_\tau}.$ Then
\begin{equation*}
\begin{split}
\Prob(\textnormal{exactly }m\textnormal{ intersection points in }B)=\frac{1}{m!}\left.\left(-\frac{\dd}{\dd t}\right)^m\det\left(\mathbb{I}-t\left.{\cal S}^0\right|_{\mathbb{B}} \right)\right|_{t=1}+O(1/k),
\end{split}
\end{equation*}
In particular, for the intersection height $x_\tau^{\textnormal{min}}$ of the bottom path with $L_\tau$ we have the limiting distribution function
\begin{equation}\label{xminhh}
\lim_{k\rightarrow\infty} \Prob\left( x_\tau^{\textnormal{min}} \geq \frac{s}{k} \right)
=\det\left(\mathbb{I}- \left.{\cal S}^{0}\right|_{\mathbb{B}(s)}\right),
\end{equation} 
where $\mathbb{B}(s)$ is the finite set of integers $\left\{0,1,\ldots, \left\lfloor s-1/2 \right\rfloor  \right\}$ and $\left\lfloor s-1/2 \right\rfloor$ denotes the integer part of $s-1/2.$ 
\end{theorem}

\noindent
\textbf{Remark:} Imagine we are given two samplers creating tilings of the $(2k,R,R)$-hexagon, one sampling uniformly from all possible tilings and the other only from those with a symmetry w.r.t. the horizontal axis  (viz. half-hexagon tilings). Unfortunately both samplers have been given ambiguous names and hence we wish to test the hypothesis ``horizontal symmetry" against the alternative ``unconstrained". To that end choose a large $k$ and $R$ and consider the statistic $x_{\textnormal{min}},$ which is the smallest positive height of an intersection point on the line $L_0.$ The limiting distribution function of $x_\textnormal{min}$ for the ensemble with the symmetry is given by the previous theorem, for the full ensemble it is given by the same formula with the operator  ${\cal S}^0$ replaced by ${\cal S}$ with the kernel 
\[
 {\cal S}_{ij}=\frac{\sin\left(2H(\tau,0)  \pi(i-j) \right)}{\pi(i-j)}. 
\]

The previous theorem in principle allows to statistically distinguish the ensembles of symmetric tilings of the full $(2k,R,R)$-hexagon from those without symmetry constraints.

\section{Correlation functions}\label{correlationfunctions}

In this section we compute the correlation functions of the ensemble $DOPE^{\text{sym}}(N,k)$ and derive the close relationship with 
$DOPE(2N,2k).$ We first summarise some facts on the $DOPE(2N,2k)$ ensembles \eqref{dope}, the most important of which is the determinantal structure of the correlation functions. It involves the eponymous orthogonal polynomials. A scalar product on the set of complex valued functions on the set of nodes $X_{2N}=-Y_N\cup Y_N$ is associated to the weight function $\wq$ via
\begin{equation}\label{scalarproduct}
(f,g)\mapsto \sum_{i=0}^{2N-1} \wq\left(x_{2N,i}\right)f\left(x_{2N,i}\right)
\overline{g\left(x_{N,i}\right)}.
\end{equation}
By applying the Gram-Schmidt procedure to the sequence of monomials $1,x,\ldots x^{2N-1}$ we obtain a family of \emph{orthonormal polynomials} $p_{2N,0},\ldots ,p_{2N,2N-1},$ i.e. the degree of $p_{2N,j}$ is equal to $j$ and the relation
\[
\sum_{i=0}^{2N-1} \wq\left(x_{N,i}\right)p_{2N,k}\left(x_{N,i}\right)
\overline{p_{2N,l}\left(x_{2N,i}\right)}=\delta_{kl}
\]
holds. Note that since the nodes, the weights and the coefficients of the $p_{2N,k}$ are real, we can omit complex conjugation. Furthermore the leading coefficient $\gamma_{2N,k}$ of $p_{N,k}$ is assumed to be positive. Denote by $\pi_{2N,k}:=\gamma_{2N,k}^{-1}p_{2N,k}$ the $k$th monic orthogonal polynomial. The $m$-point correlation functions $\Rh_m(x_1,\ldots,x_m)$ describe the probability that a configuration of $2k$ particles in $DOPE(2N,2k)$ contains particles at each of the $m$ sites $x_1,\ldots,x_m$ ($m\leq 2k$). We have
\begin{equation}
\begin{split}
\Rh_m(x_1,\ldots,x_m)&=\Prob(\text{particles at each of the sites }x_1,\ldots,x_m)\\
&=\det\left( \Kh\left(x_i,x_j\right)  \right)_{i,j=1,\ldots,m},
\end{split}
\end{equation}
where for $x,y \in X_{2N}$ the correlation kernel $\Kh(x,y)$ is given by
\begin{equation}\label{Christoffel}
\begin{split}
\Kh(x,y)&=\sqrt{\wq(x)\wq(y)}\sum_{n=0}^{2k-1}p_{2N,n}(x)p_{2N,n}(y)\\
&=\sqrt{\wq(x)\wq(y)}\cdot\frac{\gamma_{2N,2k-1}}{\gamma_{2N,2k}}\cdot
\frac{p_{2N,2k}(x)p_{2N,2k-1}(y)-p_{2N,2k}(y)p_{2N,2k-1}(x)}{x-y}
\end{split}
\end{equation}
if $x\neq y,$ and otherwise
\begin{equation}
\Kh(x,x)=\wq(x)\cdot\frac{\gamma_{2N,2k-1}}{\gamma_{2N,2k}}\cdot\left(
p_{2N,2k}^\prime(x)p_{2N,2k-1}(x)-p_{2N,2k-1}^\prime(x)p_{2N,2k}(x) \right).
\end{equation} 
The second ``$=$" in equation \eqref{Christoffel} is known as Christoffel-Darboux formula. The derivations of both the particular form of the correlation functions and the summation formula are fundamental calculations in random matrix theory, see \cite[Ch. 5]{Forrester08} or \cite[Ch. 5]{Mehta04}. 
\medskip

We can also obtain determinantal representations for the $m$-point correlation functions of the $DOPE^{\text{sym}}(N,k)$ ensembles \eqref{dopesym}, in particular the following Theorem shows how $DOPE^{\text{sym}}(N,k)$ and $DOPE(2N,2k)$ are related. This Theorem was pointed out to the author by P. J. Forrester (personal communication). A similar derivation involving Hermite polynomials is applied in \cite{TraWid07}, where correlation functions of $2n$ non-intersecting Brownian bridges and $n$ non-intersecting Brownian excursions are shown to be related in a similar fashion. 
\begin{theorem}
The $m$-point correlation function of the ensemble $DOPE^{\text{sym}}(N,k)$ ($m\leq k$) can be expressed as a determinant,
\begin{equation}
R_m^{(N,k)}(x_1,\ldots,x_m)=\det\left( K_{N,k}\left(x_i,x_j\right)  \right)_{i,j=1,\ldots,m}.
\end{equation}
The kernel $K_{N,k}$ can be expressed as
\[
K_{N,k}(x,y)=\widehat{K}_{2N,2k}(x,y)-\widehat{K}_{2N,2k}(x,-y),\;x,y\in Y_N,
\]
where $\Kh$ is the correlation kernel of $DOPE(2N,2k)$) given in eq. \eqref{Christoffel}.
\end{theorem}

\begin{proof}
In analogy to the construction of the $\pi_{2N,j}$ above by Gram-Schmidt orthonormalisation one
can construct monic polynomials $q_j(z)$ of degree $j,$ $j=0,\ldots,N-1$ with the following orthogonality property
\begin{equation*}
\sum_{x\in Y_N}x^2w_{N}(x)q_i\left(x^2\right)q_j\left(x^2\right)=\frac{\delta_{ij}}{\epsilon_i^2}, 
\end{equation*}
where we choose $\epsilon_i>0$ (the leading coefficient of the $j$th orthonormal polynomial constructed in the Gram-Schmidt procedure).
Once these are at hand, we can mimic the standard random matrix computations \cite[Ch. 5]{Forrester08} and find a determinantal representation of the $m$-point correlation function with kernel
\[
K_{N,k}(x,y)=\sqrt{x^2w_{N}(x)}\sqrt{y^2w_{N}(y)}\sum_{n=0}^{k-1}\epsilon_n^2q_n\left(x^2\right)q_n\left(y^2\right).
\]
The crucial observation pointed out by Forrester is the following alternative construction of the $q_j.$ Consider the monic polynomials $\pi_{2N,j}$ orthogonal with respect to the even weight function $\wq$ on the set of nodes $X_{2N}=Y_N\cup -Y_N,$ 
\begin{equation*}
\sum_{x\in X_{2N}}\wq(x)\pi_{2N,i}\left(x\right)\pi_{2N,j}\left(x\right)=\frac{1}{\gamma_{2N,i}^2}\delta_{ij}. 
\end{equation*}
Since $\wq$ is even, it follows easily from the Gram-Schmidt procedure that $\pi_{2N,2j}(x)$ is even and $\pi_{2N,2j+1}(x)$ is odd. In particular we have $\pi_{2N,2j+1}(x)=xq_j\left(x^2\right)$ for a monic polynomial $q_j$ of degree $j.$ The $q_j$ satisfy
\begin{equation}\label{sympolynom}
\begin{split}
&\sum_{x\in Y_N}x^2w_N(x)q_i\left(x^2\right)q_j\left(x^2\right)\\
=&\sum_{x\in Y_N}w_N(x)\pi_{2N,2i+1}\left(x\right)\pi_{2N,2j+1}\left(x\right)\\
=&\frac{1}{2}\sum_{x\in X_{2N}}\wq(x)\pi_{2N,2i+1}\left(x\right)\pi_{2N,2j+1}\left(x\right)=\frac{1}{2\gamma_{2N,2i+1}^2}\delta_{ij} 
\end{split}
\end{equation}
and hence are the sought for polynomials. For $x,y\in Y_N$ the correlation kernel for the ensemble \eqref{dopesym} can be written as
\begin{equation}\label{kersym}
\begin{split}
K_{N,k}(x,y)=&\sqrt{x^2w_{N}(x)}\sqrt{y^2w_{N}(y)}\sum_{n=0}^{k-1}2\gamma_{2N,2n+1}^2q_n\left(x^2\right)q_n\left(y^2\right)\\
=&\sqrt{\wq(x)}\sqrt{\wq(y)}\sum_{n=0}^{k-1}2\gamma_{2N,2n+1}^2
\pi_{2N,2n+1}(x)\pi_{2N,2n+1}(y)\\
=&2\sqrt{\wq(x)}\sqrt{\wq(y)}\sum_{n=0}^{k-1}
p_{2N,2n+1}(x)p_{2N,2n+1}(y)\\
=&\sqrt{\wq(x)}\sqrt{\wq(y)}\left[ \sum_{n=0}^{2k-1}
p_{2N,n}(x)p_{2N,n}(y)-\sum_{n=0}^{2k-1}
p_{2N,n}(x)p_{2N,n}(-y) \right]\\
=&\Kh(x,y)-\Kh(x,-y),
\end{split}
\end{equation}
the last ``$=$" follows from $\wq(y)=\wq(-y).$ This finishes the proof.
\end{proof}
In the following we will show that for any fixed $0<\theta<b$ and $x,y\in Y_N$ with $x,y>\theta>0$ the summand $\Kh(x,-y)$ tends to zero in the considered limit, and hence the correlation kernels $\Kh$ and $K_{N,k}$ asymptotically coincide. For nodes which are in $O(1/N)$ distance to $0,$ this is in general not the case.

\smallskip
\noindent
\textbf{Remark.} We could also include a node at $0$ in $Y_N$, with $w_N(0)$ some non-negative value.
The symmetrised weight $\widehat{w}_{2N+1}$ is then defined on a set $X_{2N+1}=-Y_N\cup \{0\}\cup Y_N.$ 
Configurations containing $0$ occur with probability $0$ in $DOPE^{\text{sym}}(N,k)$ and the calculations of equation \eqref{sympolynom} also apply in this situation, leading to a kernel $K_{N,k}=\widehat{K}_{2N+1,2k}(x,y)-\widehat{K}_{2N+1,2k}(x,-y).$ This case occurs for example in the half-hexagon model. For notational convenience, we discuss the case of an even number of nodes.\\

%
\section{Analytic assumptions}\label{analyticassumptions}

In order to state our asymptotic results, we review some terminology and results from the monograph \cite{BKMM07}.
In our main application to random tilings, the asymptotic behaviour of $p_{2N,2k}(z)$ and $K_{2N,2k}$ as $N$ and $k$ \emph{simultaneously} tend to infinity plays a crucial role. The results of \cite{BKMM07} are obtained under some technical assumptions \cite[Section 1.2]{BKMM07} on the weight, nodes and number of particles. These are in particular fulfilled in the situation of the (half-) hexagon tilings.
\subsection{Basic assumptions}\label{basicassumptions}
\subsubsection*{The nodes}
The set of nodes $Y_{N}=\{y_{N,0}<\ldots< y_{N,N-1}\}$ is viewed as a subset of a symmetric set $X_{2N}=\{x_{2N,0}<\ldots <x_{2N,2N-1}\}\subset [-b,b]$ of nodes, i.e. $y_{N,n}=x_{2N,N+n},$ $n=0,\ldots,N-1.$ We assume the existence of a \emph{node density function} $\rhoz,$ which is real-analytic in a complex neighbourhood of $[-b,b],$ strictly positive in $[-b,b]$ and satisfies a normalisation condition and a certain quantisation rule, namely
\begin{equation}\label{thenodes}
\int_{-b}^{b}\rhoz(x)\dd x=1\text{ and }\int_{-b}^{x_{2N,n}}\rhoz(x)\dd x=\frac{2n+1}{4N},\; n=0,\ldots,2N-1.
\end{equation}
Due to our symmetry assumption on $X_{2N}$ $\rhoz$ must be even.
\subsubsection*{The weight function}
We assume that we can write the weight function in the form
\begin{equation}\label{theweightfunction}
\wq\left(x_{2N,n}\right)=
(-1)^{2N-n-1}e^{-2NV_{2N}\left(x_{2N,n}\right)} \prod_{
\begin{smallmatrix}
m=0 \\
n\neq m\end{smallmatrix}
}^{2N-1}\left(x_{2N,n}-x_{2N,m}\right)^{-1},
\end{equation}
where $V_{2N}(x)$ is a real-analytic function defined in a neighbourhood $G$ of $[-b,b].$ Furthermore
\begin{equation}\label{potentialfunction}
V_{2N}(x)=V(x)+\frac{\eta(x)}{2N},
\end{equation}
where $V(x)$ is a fixed real-analytic potential function independent of $N$ and
\begin{equation*}
\limsup_{N\rightarrow\infty} \sup_{z\in G} \left|\eta(z)\right|<\infty.
\end{equation*}
As opposed to $V(x),$ the correction $\eta(x)$ may depend on $N.$ Notice that since $\wq$ is even, so is $V_{2N}$ and $V.$ 
\subsubsection*{The number of particles}
The number of particles $k$ and the number of nodes $N$ in $DOPE^{\textnormal{sym}}(N,k)$ are related by
\begin{equation}\label{thedegree}
k=cN+\kappa,
\end{equation}
where $c \in (0,1)$ and $\kappa$ remains bounded as $N\longrightarrow \infty.$

\smallskip
Further assumptions are difficult to express explicitly in terms of the nodes and the weight and are postponed to Section \ref{furtherassumptions}.
\subsection{The associated equilibrium energy problem}\label{equilibrium}
The asymptotic behaviour of $\pi_{N,k}$ is closely related to the asymptotic distribution of zeroes in the interval $[-b,b],$ which in turn can be expressed in terms of quantities arising in a related constrained variational problem \cite{KuiRak99}. Define a real-analytic function $\varphi$ by
\begin{equation}\label{phi}
\varphi(x):=V(x)+\int_{-b}^b\log|x-y|\rhoz(y)\dd y.
\end{equation} 
Note that according to the representation \eqref{theweightfunction} $\wq(x)$ is asymptotically equal to
\begin{equation}
\wq(x)\sim e^{-2N\varphi(x)-\eta(x)}.
\end{equation}

Recall that $\pi_{2N,2k}$ minimises the scalar product \eqref{scalarproduct} among all monic polynomials of degree $2k.$ This relates the asymptotic distribution of zeroes of $\pi_{2N,2k}$ to (the variational derivative of) the quadratic functional of Borel measures on $[a,b]$ defined by
\begin{equation}\label{quadraticfunctional}
E_c[\mu,V]:=\Emu:=c\int_{-b}^b\int_{-b}^b \log\frac{1}{|x-y|}\dd\mu(x)\dd\mu(y)+\int_{-b}^b\varphi(x)\dd\mu(x).
\end{equation}
We are looking for a measure $\mumin$ which minimises $\Emu$ subject to the normalisation condition 
\begin{equation}\label{normalisation}
\int_{-b}^b\dd\mu(x)=1
\end{equation}
and the upper and lower constraints
\begin{equation}\label{upperlowerconstraint}
0\leq \int_{x\in B}\dd\mu(x)\leq \frac{1}{c}\int_{x\in B}\rhoz(x)\dd x \textnormal{ for every a Borel set }B\textnormal{ in }[-b,b].
\end{equation}
We refer to $\mumin$ as the \emph{equilibrium measure}. The constraints are due to the fact that all zeroes of $\pi_{2N,2k}$ are contained in the interval $\left[x_{2N,0},x_{2N,N-1}\right]$ and a closed interval $\left[x_{2N,n},x_{2N,n+1}\right]$ between two consecutive nodes contains at most one zero of $\pi_{2N,2k}.$ Note that minimising $\Emu$ simply subject to the normalisation condition \eqref{normalisation} is formally like seeking a critical point of
\begin{equation*} 
F_c[\mu]=\Emu -l_c\int_{-b}^b\dd\mu(x)
\end{equation*}
with a Lagrange multiplier $l_c:=l_c[V]$. When $\mu=\mumin,$ $l_c$ is a real constant. $\mumin$ is known to be unique and it has a piecewise analytic density $\dmut.$ Points of non-analyticity are finite in number and do not occur in points where the upper and lower constraints 
$\dmut>0$ and $\dmut<1/c$ hold strictly and simultaneously. Under our symmetry assumptions we additionally state for later reference the following easy 
\begin{lemma}\label{lem:muminsymmetric}
 $\mumin$ is invariant under the transformation $x\mapsto -x.$ 
\end{lemma}
\begin{proof}
The symmetry of the weight $\wq$ implies the symmetry of $V_{2N}$ and $V$ (cf. equations \eqref{theweightfunction} and \eqref{potentialfunction}). Hence the external field $\varphi(x)$ defined in \eqref{phi} is even. It follows that the functional $\Emu$ in \eqref{quadraticfunctional} is invariant under the transformation $\mu(x) \mapsto \mu(-x).$ By uniqueness
the symmetry of $\mumin$ follows.  
\end{proof}
\subsection{The equilibrium measure: related quantities and further assumptions: }\label{furtherassumptions}
The constraints give rise to the following
\begin{definition}\label{defbandvoidsaturated}
A \emph{band} is a maximal open subinterval of $[-b,b]$ where $\mumin$ is a measure with a real-analytic density $\dmut$ that satisfies $0<\dmut<\rhoz(x)/c.$ A \emph{void} is a maximal open subinterval of $[-b,b]$ in which $\dmut \equiv 0,$ i.e. meets the \emph{lower} constraint. A \emph{saturated region} is a maximal open subinterval of $[-b,b]$ in which $\dmut \equiv 1/c$ and hence meets the \emph{upper} constraint. If no stress is put on the active constraint, voids and saturated regions are referred to as \emph{gaps}.
\end{definition}
\noindent
As announced in Section \ref{basicassumptions} we make some further assumptions on the weight and the nodes which are expressed in terms of the equilibrium measure, cf. \cite[Section 2.1.2]{BKMM07}. 
Let ${\cal F}$ be the closed set of points $x\in [-b,b],$ where $\dmut=\rhoz(x)/c$ or $\dmut=0$, i.e. one of the constraints \eqref{upperlowerconstraint} is active. We assume that the connected components of ${\cal F}$ have non-empty interior and $-b,b\in {\cal F}$. 
 Furthermore we make the following assumptions on the behaviour of $\dmut$ at an endpoint of a band $I.$ Let $z_0$ be a band end point. If the gap at $z_0$ is a void, then 
\begin{equation*}
\lim_{x\rightarrow z_0,\; x\in I}\frac{1}{ \sqrt{|x-z_0|} }\dmu=K,\quad \text{with}\; 0<K<\infty. 
\end{equation*}
Similarly, if the gap at $z_0$ is a saturated region, we suppose that
\begin{equation*}
\lim_{x\rightarrow z_0,\; x\in I}\frac{1}{ \sqrt{|x-z_0|} }\left(\frac{\rhoz(x)}{c}-\dmu\right)=K,\quad \text{with}\; 0<K<\infty. 
\end{equation*}
So the constraints are met like a square root.

\smallskip
\noindent
\textbf{Remark.} The one-point correlation function in the hexagonal tiling problem is shown to converge pointwise to the density function of the corresponding equilibrium measure in \cite[Theorem 3.12]{BKMM07}, see also Theorems \ref{correlation:band}, \ref{correlation:gap} and \ref{correlation:edge}. In the (half-) hexagon tilings there is a single band corresponding to the intersection of a vertical line with the temperate zone, the surrounding gaps to the intersection of the line with the arctic regions.
%
%

\medskip
\noindent
Finally we define the quantities involved in the asymptotic expressions for $\pi_{N,k}$ in a gap $\Gamma$ and the band $I,$ cf. \cite[Sect. 2.1.4]{BKMM07}. The variational derivative of $\Emu$ evaluated at $\mu=\mumin$ is equal to
\begin{equation}\label{variationalderivative}
\dEmu(x):=\left.\frac{\delta E_{c}[{\mu},V]}{\delta {\mu}}\right|_{{\mu}={\mu}_{\textnormal{min}}^{c}}(x)=-2c\int_{-b}^b\log|x-y|\dd\mumin(x)+\varphi(x).
\end{equation}
We have 
\begin{equation}\label{dEmuminuslc}
\dEmu(x)-l_c \begin{cases}
>0 & \text{if $x$ is in a void,}\\
\equiv 0 &\text{if $x$ is in a band,}\\
<0 &\text{if $x$ is in a saturated region.}
\end{cases}
\end{equation}
The function $\dEmu-l_c$ defined in a gap $\Gamma$ extends analytically from the interior. Furthermore, we denote by $\Lc$ and $\LcI(z)$ the function 
\begin{equation*}
c\int_{-b}^b\log|z-x|\dd\mumin(x),
\end{equation*}
defined for $z,$ in a gap $\Gamma$ or a band $I,$ respectively.
$\Lc$ is analytic in $z$ if $\Re z\in\Gamma$ and $\Im z$ sufficiently small.
$\LcI(z)$ has an analytic continuation to a neighbourhood of $\overline{I}.$ 
\section{Statement of asymptotic results}\label{statements}
We have now introduced all the notation required to state our asymptotic results. The most interesting results are possibly the ``close to zero" assertions in Theorems \ref{correlation:band} and \ref{leftmostparticle} which involve a modified sine kernel and hence state a different asymptotic behaviour in $DOPE^{\text{sym}}.$ The other results are almost verbatim copies of the corresponding results for the $DOPE(2N,2k)$ ensemble in \cite{BKMM07}. Amusingly, the former results can be inferred directly from estimates the of \cite[Chapter 7]{BKMM07} on the correlation kernel while the other results require some additional estimates. The proofs are delivered in the next section. Recall the definition of the $m$-point correlation function of $DOPE^\textnormal{sym}(N,k),$
\[
R_m^{(N,k)}(x_1,\ldots,x_m)=\mathbb{P}(\mbox{particles at each of the sites }x_1,\ldots,x_m).
\]
\smallskip

\noindent
The first results treat the band case and involve the \emph{sine kernel}
\begin{equation}\label{def:sinekernel}
S(\xi,\eta):=\frac{\sin(\pi(\xi-\eta))}{\pi(\xi-\eta)}.
\end{equation}
Extend $S$ to the diagonal by setting $S(\xi,\xi):=1.$ Correlations for nodes close to $0$ are expressed in terms of the kernel given by
\begin{equation*}
S^0(\xi,\eta):=\frac{\sin(\pi(\xi-\eta))}{\pi(\xi-\eta)}-\frac{\sin(\pi(\xi+\eta))}{\pi(\xi+\eta)}.
\end{equation*}
\begin{theorem}\label{correlation:band}
Denote by $R_m^{(N,k)}(x_1,\ldots,x_m)$ the $m$-point correlation function for the ensemble $DOPE^\textnormal{sym}(N,k).$
For $x$ in the interior of a band $I:=(\alpha,\beta)$ define
\begin{equation}\label{delta}
\delta(x):=\left[ c\dmu \right]^{-1}.
\end{equation}
Let $x>0$ and let $\xi_N^{(1)},\ldots,\xi_N^{(m)}$ belong to a fixed bounded set $D\subset \mathbb{R}$ in such  a way that the points defined by
\[
x_j:=x+\xi_N^{(j)}\frac{\delta(x)}{2N},\;j=1,\ldots,m
\]
are all nodes in $Y_N.$ Consequently, $x_j\rightarrow x$ as $N\rightarrow \infty.$ Then there is a constant $C_{D,m}$ depending on $D$ and $m$ such that for sufficiently large $N$ we have
\begin{equation}\label{bandest}
\max_{\xi_N^{(1)},\ldots,\xi_N^{(m)}\in D} \left|
R_m^{(N,k)}(x_1,\ldots,x_m)-\left( \frac{1}{\delta(x)\rhoz(x)}\right)^m
\det\left( S\left(\xi_N^{(i)},\xi_N^{(j)} \right) \right)_{1\leq i,j\leq m}
\right|\leq \frac{C_{D,m}}{N}.
\end{equation}
If $x=0$ and $(-\beta,\beta)$ is a band of $\dmut,$ and $\xi_N^{(1)},\ldots,\xi_N^{(m)}$ are from a fixed bounded set $\tilde{D}\subset \mathbb{R}$ such that
\[
x_j:=\xi_N^{(j)}\frac{\delta(0)}{2N},\;\xi_N^{(j)}>0,    \;j=1,\ldots,m,
\]
are nodes in $Y_N$ then there is a constant $\tilde{C}_{\tilde{D},m}$ depending on $D$ and $m$ such that for sufficiently large $N$
\begin{equation}\label{bandzero}
\max_{\xi_N^{(1)},\ldots,\xi_N^{(m)}\in D} \left|
R_m^{(N,k)}(x_1,\ldots,x_m)-\left(\frac{1}{\rhoz(0)\delta(0)} \right)^m 
\det\left( S^0\left(\xi_N^{(i)},\xi_N^{(j)} \right) \right)_{1\leq i,j\leq m}
\right| 
\leq \frac{\tilde{C}_{\tilde{D},m}}{N}.
\end{equation}

\end{theorem}

\noindent
\textbf{Remark.} The first statement in the preceding theorem is the $DOPE^{\text{sym}}(N,k)$ analogue of \cite[Theorem 3.1]{BKMM07} for $DOPE(2N,2k).$

Another interesting statistic concerning particle systems is the fluctuation of extremal particles. E.g. in random tilings of the half-hexagon this concerns the intersection point of a vertical line with the boundary of the arctic region resp. with the bottom path in the corresponding family of lattice paths. More generally, let $B\subset X_N$ be a set of nodes and $m$ an integer such that $0\leq m \leq \min(\#B,k).$ A well studied statistic is
\begin{equation*}
\begin{split}
A_m^{(N,k)}(B):&=\Prob(\textnormal{there are exactly }m\textnormal{ particles in }B)\\
&=\frac{1}{m!}\left.\left(-\frac{\dd}{\dd t}\right)^m\det\left(\mathbb{I}-t\left.{\cal K}_{N,k}\right|_B \right)\right|_{t=1},
\end{split}
\end{equation*}
where ${\cal K}_{N,k}$ is the operator on $\ell_2(Y_N)$ with kernel $K_{N,k}(x,y)$ and $ \left.{\cal K}_{N,k}\right|_B$ its restriction to $\ell_2(B).$ 
\smallskip

\noindent
In $DOPE^{\text{sym}}(N,k)$ also the behaviour of the leftmost particle is of separate interest. Due to our assumptions in \ref{furtherassumptions}, $0$ lies in the interior of a band or a gap, the gap case being covered by Theorem \ref{correlation:gap}. Hence we assume for the next result that no constraint of $\mumin$ is active in $0$ and denote by ${\cal S}^{0}$ the operator on $\ell_2(\mathbb{Z})$ given by
\begin{equation}\label{def:Ssym}
{\cal S}^{0}_{ij}=\frac{\sin\left(\frac{1}{\delta(0)\rhoz(0)}  \pi(i-j) \right)}{\pi(i-j)} -\frac{\sin\left(\frac{1}{\delta(0)\rhoz(0)} \pi(i+j) \right)}{\pi(i+j)},
\end{equation}  
where $\delta(x)$ is defined in equation \eqref{delta}.
\begin{theorem}\label{leftmostparticle}
Let $x=0$ be an interior point of a band of $\mumin.$ For a fixed set $\mathbb{B}=\{j_1<j_2<\ldots<j_l\}$ of non-negative integers consider the subset $B=\{y_{N,j_1},\ldots,y_{N,j_l} \}\subset Y_N.$ Then
\begin{equation}\label{localoccupation}
A_m^{(N,k)}(B)=\frac{1}{m!}\left.\left(-\frac{\dd}{\dd t}\right)^m\det\left(\mathbb{I}-t\left.{\cal S}^0\right|_{\mathbb{B}} \right)\right|_{t=1}+O(1/N),
\end{equation}
In particular, for the position of the leftmost particle $x_{\textnormal{min}}$ in the  $DOPE^{\textnormal{sym}}(N,k)$ ensemble we have the limiting distribution function
\begin{equation*}
\lim_{N\rightarrow\infty} \Prob\left( x_{\textnormal{min}} \geq \frac{s}{2N\rhoz(0)} \right)
=\det\left(\mathbb{I}- \left.{\cal S}^{0}\right|_{\mathbb{B}(s)}\right),
\end{equation*} 
where $\mathbb{B}(s)$ is the finite set of integers $\left\{0,1,\ldots, \left\lfloor s-1/2 \right\rfloor  \right\}$ and $\left\lfloor s-1/2 \right\rfloor$ denotes the integer part of $s-1/2.$ 

\end{theorem}

\noindent
\textbf{Remark:} The related result for $DOPE(2N,2k)$ is \cite[Theorem 3.2]{BKMM07} on local occupation probabilities in bands.

\smallskip
\begin{theorem}\label{correlation:gap}
Denote by $R_m^{(N,k)}$ the $m$-point correlation function of $DOPE^{\text{sym}}(N,k).$ 
For a fixed closed interval $F$ in a gap $\Gamma$ there are constants $K_F$ and $C_{F,m},$ such that for sufficiently large $N$ and nodes
$x_1,\ldots,x_m\in F\cap Y_N$
\begin{equation*}
\max_{x_1,\ldots,x_m\in F} \left|
R_m^{(N,k)}(x_1,\ldots,x_m)
\right|\leq C_{F,m}\frac{e^{-mK_FN}}{N^m}
\end{equation*}
holds if $\Gamma$ is a void. If $\Gamma$ is a saturated region, we have the estimate 
\begin{equation*}
\max_{x_1,\ldots,x_m\in F} \left|
R_m^{(N,k)}(x_1,\ldots,x_m)-1
\right|\leq C_{F,m}\frac{e^{-K_FN}}{N}.
\end{equation*}
 
%
%

\end{theorem}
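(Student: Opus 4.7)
The plan is to reduce Theorem \ref{correlation:gap} to its $DOPE(2N,2k)$ counterparts (the verbatim versions of \cite[Thms.~3.3,~3.5]{BKMM07}) via the kernel identity \eqref{kersym}. I would start from the determinantal formula
\[
R_m^{(N,k)}(x_1,\ldots,x_m)=\det\bigl(K_{N,k}^{\textnormal{sym}}(x_i,x_j)\bigr)_{i,j=1}^m
\]
and expand each matrix entry as $K_{N,k}^{\textnormal{sym}}(x_i,x_j)=K_{2N,2k}(x_i,x_j)-K_{2N,2k}(x_i,-x_j)$, reducing the task to estimating $K_{2N,2k}$ at pairs of points in closed subsets of gaps of the $DOPE(2N,2k)$ problem. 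Because $w_{2N}$ is even, the corresponding equilibrium measure is symmetric about $0$, so the reflected set $-F$ sits in a closed subset of a gap of the same type as $\Gamma$; this legitimises applying the BKMM07 bounds to both the direct and the reflected argument.

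The main technical tool I would use is the positive semi-definiteness of $K_{2N,2k}$ as the reproducing kernel of an orthogonal projection on $\ell^2(X_{2N})$. This yields the Cauchy--Schwarz bound
\[
|K_{2N,2k}(u,v)|^2\le K_{2N,2k}(u,u)\,K_{2N,2k}(v,v),
\]
and, applied to the complementary projection, for $u\neq v$,
\[
|K_{2N,2k}(u,v)|^2\le\bigl(1-K_{2N,2k}(u,u)\bigr)\bigl(1-K_{2N,2k}(v,v)\bigr).
\]
Since $K_{2N,2k}(u,u)$ is precisely the one-point correlation function of $DOPE(2N,2k)$, the required diagonal estimates come directly from the $m=1$ case of the BKMM07 theorems.

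For the void case, combining $K_{2N,2k}(u,u)\le Ce^{-KN}/N$ with the first Cauchy--Schwarz bound gives $|K_{2N,2k}(u,v)|=O(e^{-KN}/N)$ whenever $u,v$ lie in closed subsets of voids; every entry of $\bigl(K_{N,k}^{\textnormal{sym}}(x_i,x_j)\bigr)_{i,j=1}^m$ is thus $O(e^{-KN}/N)$ and Hadamard's inequality delivers the $C_{F,m}e^{-mKN}/N^m$ bound on $R_m^{(N,k)}$. For the saturated case, $1-K_{2N,2k}(u,u)=O(e^{-KN}/N)$ and the complementary bound promote the off-diagonal entries of $K_{N,k}^{\textnormal{sym}}$ to $O(e^{-KN}/N)$ while the diagonal entries become $1+O(e^{-KN}/N)$; expanding $\det(I+E)=1+\operatorname{tr}E+O(\|E\|^2)$ with $\|E\|_\infty=O(e^{-KN}/N)$ then yields the advertised $1+O(e^{-KN}/N)$ asymptotics.

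The principal obstacle is ensuring that the reflected points $-x_j$ genuinely fall in a closed subset of a gap of the same type, and, in the saturated case, that $x_i\neq -x_j$ so that the complementary Cauchy--Schwarz estimate is not spoiled by a $\delta_{x_i,-x_j}$ contribution on the diagonal of $\mathrm{Id}-K_{2N,2k}$. Both conditions are guaranteed once $F$ is bounded away from $0$, which is compatible with $F\subset\Gamma$ whenever $\Gamma\subset(0,b]$; as the remark following \eqref{kersym} indicates, nodes at $O(1/N)$ distance from $0$ constitute a genuinely distinct regime that lies outside the scope of this theorem.
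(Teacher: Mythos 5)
Your proposal is correct, and it reaches the result by a route that is genuinely different from the paper's in its technical core, although the two share the same skeleton: expand $R_m^{(N,k)}$ as $\det\bigl(K^{\textnormal{sym}}_{N,k}(x_i,x_j)\bigr)$, split each entry via \eqref{kersym} into $K_{2N,2k}(x_i,x_j)-K_{2N,2k}(x_i,-x_j)$, and use the evenness of $w_{2N}$ (hence of $\mumin$) to place $-F$ in a gap of the same type. Where you diverge is in how the individual entries are controlled. The paper bounds the off-diagonal kernel in a void through Lemma \ref{symmetrickernel} and Proposition \ref{Lemma712}, i.e.\ through the Riemann--Hilbert asymptotics of $\pi_{N,k}$ and the explicit exponential factor involving $\frac{\delta E_c}{\delta\mu}(x)-l_c$ from \eqref{dEmuminuslc}, and then disposes of the saturated case by passing to the dual hole ensemble, for which saturated regions become voids. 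You instead exploit that $K_{2N,2k}$ is the kernel of a rank-$2k$ orthogonal projection on $\ell^2(X_{2N})$, so that all off-diagonal bounds follow from the diagonal (one-point) asymptotics by Cauchy--Schwarz applied to $K$ (void case) or to $\mathbb{I}-K$ (saturated case); your complementary-projection inequality is precisely the dual-ensemble argument in linear-algebraic disguise, since $\mathbb{I}-K_{2N,2k}$ is the correlation kernel of the hole process. Your route is softer and more self-contained, requiring only the $m=1$ case of the cited BKMM07 theorems plus Hadamard's inequality and $\det(\mathbb{I}+E)=1+O(\|E\|)$, whereas the paper's route produces quantitative off-diagonal decay that it reuses elsewhere (e.g.\ inside the proof of Lemma \ref{symmetrickernel}). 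One small simplification: your worry that $x_i=-x_j$ might occur is moot, since all nodes lie in $Y_N\subset(0,b]$ while $-x_j<0$; likewise $-F$ is automatically a closed subset of a gap of the same type by the symmetry of $\mumin$, whether or not $\Gamma$ contains $0$, so no additional hypothesis that $F$ be bounded away from $0$ is needed for this theorem.
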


\noindent
\textbf{Remark:} The corresponding results for $DOPE(2N,2k)$ are \cite[Theorems 3.3 and 3.5]{BKMM07}. 

\medskip
\noindent
The result for nodes close to a band end point is expressed in terms of the \emph{Airy kernel}
\begin{equation}\label{def:Airykernel}
A(\xi,\eta):=\frac{\textnormal{Ai}(\xi)\textnormal{Ai}^\prime(\eta)-\textnormal{Ai}^\prime(\xi)\textnormal{Ai}(\eta)}{\xi-\eta},
\end{equation}
where $\textnormal{Ai}(x)=\frac{1}{\pi}\int_{0}^\infty \cos (t^3/3+xt) \dd t$ is the Airy function.
\begin{theorem}\label{correlation:edge} Denote by $R_m^{(N,k)}$ the $m$-point correlation function of $DOPE^{\text{sym}}(N,k).$ For each fixed $M>0,$ each integer $m$ and a right band end point $\beta$ separating $I$ from a void, there is a constant $G_{\beta}^m(M)$ such that for sufficiently large $N$ 
\begin{equation*}
\max
\left| R_m^{(N,k)}(x_1,\ldots,x_m)-\left[ \frac{(\pi c B_\beta)^{2/3}}{(2N)^{1/3}\rhoz(\beta)} \right]^m \det\left(A(\xi_N^{(i)},\xi_N^{(j)})  \right)_{1\leq i,j \leq m}    \right| \leq \frac{G_{\beta}^m(M)}{N^{(m+1)/3}},
\end{equation*}
where the $\max$ is taken over nodes $x_1,\ldots,x_m\in X_N$ all satisfying
\[
\beta-M(2N)^{-2/3}<x_j<\beta-M(2N)^{-1/2},
\] 
the constant $B_\beta$ is equal to (cf. Sect. \ref{furtherassumptions})
\[
B_\beta:=\lim_{x\uparrow\beta}\frac{1}{\sqrt{\beta-x}}\dmu>0
\]
and $\xi_N^{(j)}=\left(2N\pi c B_\beta \right)^{2/3}(x_j-\beta).$ If $\beta$ is adjacent to a saturated region, then  
\begin{equation*}
\max
\left| R_m^{(N,k)}(x_1,\ldots,x_m)-1+\frac{(\pi (1-c) \overline{B}_\beta)^{2/3}}{(2N)^{1/3}\rhoz(\beta)}  \sum_{j=1}^mA(\xi_N^{(j)},\xi_N^{(j)})     \right| \leq \frac{H_{\beta}^m(M)}{N^{2/3}},
\end{equation*}
where $H_{\beta}^m(M)$ is a constant, $\xi_N^{(j)}=\left(2N\pi (1-c) \overline{B}_\beta \right)^{2/3}(x_j-\beta)$ and $\overline{B}_\beta$ is equal to 
\[
\overline{B}_\beta=\lim_{x\uparrow\beta}\frac{1}{\sqrt{\beta-x}}
\frac{c}{1-c}\left[\frac{1}{c}\rhoz(x) - \dmu\right]>0.
\]
\end{theorem}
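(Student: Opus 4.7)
The plan is to reduce the statement, via the kernel identity \eqref{kersym}, to Theorems 3.7 and 3.8 of \cite{BKMM07} for the ensemble $DOPE(2N,2k)$. Since
\[
K_{N,k}^{\textnormal{sym}}(x,y)=K_{2N,2k}(x,y)-K_{2N,2k}(x,-y),
\]
one has $R_m^{(N,k)}(x_1,\ldots,x_m)=\det\bigl(K_{N,k}^{\textnormal{sym}}(x_i,x_j)\bigr)_{i,j=1}^{m}$. Because $\beta>0$ and the $x_j$ lie in the window $\beta-M(2N)^{-2/3}<x_j<\beta-M(2N)^{-1/2}$, for $N$ sufficiently large all $x_j$ are bounded below by $\beta/2$. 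The even-weight ensemble $DOPE(2N,2k)$ therefore satisfies the hypotheses of \cite{BKMM07} at the band endpoint $\beta$, with $N$ replaced by $2N$, $k$ by $2k$, and the same equilibrium density.

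First, I would apply Theorems 3.7 and 3.8 of \cite{BKMM07} directly to $\det\bigl(K_{2N,2k}(x_i,x_j)\bigr)$. In the void case this yields the Airy determinant with prefactor $(2N\pi cB_\beta)^{2/3}/\rhoz(\beta)$ and error $O(N^{-(m+1)/3})$; in the saturated case, the linearised expression involving $\sum_j A(\xi_N^{(j)},\xi_N^{(j)})$ with prefactor $(2N\pi(1-c)\overline{B}_\beta)^{2/3}/\rhoz(\beta)$ and error $O(N^{-2/3})$. Second, I would control the reflected summand $K_{2N,2k}(x_i,-x_j)$. Since the arguments are at macroscopic separation $|x_i+x_j|\approx 2\beta$, the Christoffel-Darboux formula \eqref{Christoffel}, together with the parities $p_{2N,2n}(-y)=p_{2N,2n}(y)$ and $p_{2N,2n+1}(-y)=-p_{2N,2n+1}(y)$, gives
\[
K_{2N,2k}(x,-y)=-\sqrt{w_{2N}(x)w_{2N}(y)}\,\frac{\gamma_{2N,2k-1}}{\gamma_{2N,2k}}\,\frac{p_{2N,2k}(x)p_{2N,2k-1}(y)+p_{2N,2k}(y)p_{2N,2k-1}(x)}{x+y}.
\]
Invoking the decay announced in Section \ref{correlationfunctions} for $x,y$ bounded away from zero, and expanding the determinant $\det(K_{2N,2k}(x_i,x_j)-K_{2N,2k}(x_i,-x_j))$ multilinearly with Hadamard-type bounds on the remaining entries, every term containing at least one reflected factor is absorbed into the stated error term, leaving the Airy contribution as the principal one.

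The main obstacle is making the decay of the reflected kernel quantitative enough to fit inside the Airy-regime error budget---$O(N^{-(m+1)/3})$ in the void case and $O(N^{-2/3})$ in the saturated one. This requires simultaneous use of the Riemann-Hilbert-based polynomial asymptotics of \cite{BKMM07} at $x$ near $\beta$ (where the relevant parametrix is Airy) and at $y$ also near $\beta$, together with the observation that the numerator $p_{2N,2k}(x)p_{2N,2k-1}(y)+p_{2N,2k}(y)p_{2N,2k-1}(x)$ in the reflected kernel cannot grow faster than those Airy asymptotics permit, while the denominator $x+y$ stays bounded below by a positive constant. Once this quantitative bound on the reflected kernel is in place, the remainder of the argument is a direct transplant of the proofs in \cite{BKMM07} to the symmetric setting.
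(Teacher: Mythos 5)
Your reduction via $K^{\textnormal{sym}}_{N,k}(x,y)=K_{2N,2k}(x,y)-K_{2N,2k}(x,-y)$, the bound on the reflected kernel from the lower bound $x+y\geq 2\beta-o(1)$, and the multilinear expansion of the determinant is exactly the paper's argument for the void case. The ``main obstacle'' you flag is in fact already resolved in the paper: Proposition~\ref{Lemma712} (quoting Lemmas 7.12 and 7.14 of \cite{BKMM07}) gives $|(x-y)K_{2N,2k}(x,y)|\cdot e^{N(\ldots)}\leq C/N^{2/3}$ uniformly for nodes near a band endpoint, and since the exponential factors are $\geq 1$ there, dividing by $|x-(-y)|\approx 2\beta$ yields precisely the bound $|K_{2N,2k}(x,-y)|\leq C_r/N^{2/3}$ of Lemma~\ref{symmetrickernel}(3); no fresh Riemann--Hilbert analysis is needed beyond what is already cited. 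Your budget check is also the right one: in the void case each reflected column costs $O(N^{-2/3})$ against unreflected entries of size $O(N^{-1/3})$, giving $O(N^{-(m+1)/3})$, which matches the stated error.

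Where you genuinely diverge from the paper is the saturated case. The paper does \emph{not} perturb the black-boxed Theorem 3.8 of \cite{BKMM07}; it takes a detour over the dual (hole) ensemble, for which the saturated region becomes a void, and transplants that proof to $DOPE^{\textnormal{sym}}$. Your alternative --- apply Theorem 3.8 directly to $\det\bigl(K_{2N,2k}(x_i,x_j)\bigr)$ and then bound $\bigl|\det(A-B)-\det(A)\bigr|$ by a multilinear/Hadamard expansion with $\|B\|_{\max}=O(N^{-2/3})$ and $\|A\|_{\max}\leq 1$ (the latter because $K_{2N,2k}$ is a weighted projection kernel) --- is sound and arguably cleaner, since each term with at least one reflected column is $O(N^{-2/3})$, which sits inside the stated error $H^m_\beta(M)/N^{2/3}$. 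What the paper's dual-ensemble route buys is uniformity of method with the gap case of Theorem~\ref{correlation:gap} and avoidance of any determinant-perturbation bookkeeping; what your route buys is that you never have to re-derive the hole-ensemble kernel identity for the symmetric ensemble. Both are valid; just make the Hadamard step explicit, since in the saturated case the unperturbed entries are $O(1)$ rather than $O(N^{-1/3})$ and the cancellation structure of the determinant is doing no work for you there.
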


\noindent
\textbf{Remark.} For $DOPE(2N,2k)$ this is \cite[Theorems 3.7 and 3.8]{BKMM07}. Analogous results hold for left band end points.

\smallskip

\noindent

Let $\beta$ be the rightmost band end point and $(\beta,b)$ be a void (resp. saturated region)
Denote by $x_{\textnormal{max}}$ the position of the rightmost particle (resp. hole, i.e. unoccupied node). Since the one-point function of $DOPE(2N,2k)$ converges pointwise to $c\,\dmut$ and $DOPE^{\textnormal{sym}}(N,k)$ has the same asymptotic behaviour in regions bounded away from $0$, one expects $x_{\textnormal{max}}$ near the rightmost band edge $\beta.$ In this domain the correlation kernel is approximated by the Airy kernel. The latter kernel is the correlation kernel of the distribution of eigenvalues of a GUE matrix at the edge of the spectrum and the fluctuations of the largest eigenvalue are governed by the \emph{Tracy-Widom distribution} $\nu$ \cite{TraWid94} whose distribution function is equal to
\[
\nu((\infty,s])=\det\left(\mathbb{I}-\left.{\cal A}\right|_{[s,\infty)}\right),
\]
where $\left.{\cal A}\right|_{[s,\infty)}$ is the trace class operator on $L_2[s,\infty)$ defined by the Airy kernel. The position of the rightmost particle (properly scaled) in $DOPE(2N,2k)$ is proved to be Tracy-Widom-distributed and the proof carries over verbatim to $DOPE^{\textnormal{sym}}(N,k).$ 
\begin{theorem}[Theorem 3.9 in \cite{BKMM07}]\label{Theorem39}
Let the gap adjacent to $(\beta,b)$ be a void. Then the position of the rightmost particle $x_{\textnormal{max}}$ in the ensemble $DOPE^{\textnormal{sym}}(N,k)$ is described by the limiting distribution function
\begin{equation*}
\lim_{N\rightarrow\infty} \Prob\left( x_{\textnormal{max}} \leq \beta+
\frac{s}{(2\pi NcB_\beta)^{2/3}} \right) 
=\det\left(\mathbb{I}-\left.{\cal A}\right|_{[s,\infty)}\right).
\end{equation*} 
If the gap is a saturated region and $x_{\textnormal{max}}$ describes the position of the rightmost unoccupied node (hole), then the same relation holds with $c$ and $B_\beta$ replaced by $1-c$ and $\overline{B}_\beta,$ respectively.
\end{theorem}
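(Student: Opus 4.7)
My plan is to reduce Theorem~\ref{Theorem39} to Theorem~3.9 of \cite{BKMM07} for $DOPE(2N,2k)$ via the kernel identity \eqref{kersym}, by showing the ``mirror'' term there is negligible in trace norm on the relevant edge neighbourhood.

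First I would write the Fredholm determinant representation
\[
\Prob(x_{\textnormal{max}}\leq t) = \det\bigl(\mathbb{I} - K_{N,k}^{\textnormal{sym}}|_{(t,b]\cap Y_N}\bigr)
\]
and set $t_N := \beta + s(2\pi NcB_\beta)^{-2/3}$, $B_N := (t_N,b]\cap Y_N$. Since $\beta>0$ and $s$ is fixed, no element of $-Y_N$ falls in $(t_N,b]$ for large $N$, so $B_N = (t_N,b]\cap X_{2N}$; restricting to $Y_N$ rather than $X_{2N}$ is therefore immaterial. The goal becomes $\det(\mathbb{I} - K_{N,k}^{\textnormal{sym}}|_{B_N})\to\det(\mathbb{I} - \mathcal{A}|_{[s,\infty)})$.

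Next I would apply \eqref{kersym} to write $K_{N,k}^{\textnormal{sym}}(x,y) = K_{2N,2k}(x,y) - \tilde K(x,y)$ with $\tilde K(x,y) := K_{2N,2k}(x,-y)$. Theorem~3.9 of \cite{BKMM07} applied to $DOPE(2N,2k)$ (all hypotheses of \cite{BKMM07} are inherited by the even weight $w_{2N}$, whose equilibrium measure has the band edge $\beta$ with constant $B_\beta$) yields $\det(\mathbb{I} - K_{2N,2k}|_{B_N})\to\det(\mathbb{I} - \mathcal{A}|_{[s,\infty)})$. Thus it suffices to prove
\[
\|\tilde K|_{B_N}\|_1\longrightarrow 0,
\]
after which the two Fredholm determinants of $\mathbb{I} - K_{N,k}^{\textnormal{sym}}|_{B_N}$ and $\mathbb{I} - K_{2N,2k}|_{B_N}$ share a common limit.

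The trace-norm estimate is the principal obstacle, and the only step that goes beyond \cite{BKMM07}. Every $y\in B_N$ satisfies $y\geq t_N > \beta/2$ for large $N$, so $-y<-\beta/2$ is bounded away from $B_N$ by a macroscopic distance of order $2\beta$: this is precisely the regime in which Section~\ref{correlationfunctions} announces $K_{2N,2k}(x,-y)\to 0$. Concretely, the Christoffel--Darboux representation of $\tilde K$ has smooth denominator $x+y\approx 2\beta$ bounded below (no $N$-enhancement from a near-diagonal singularity), and the Deift--Zhou Riemann--Hilbert asymptotics of \cite[Sect.~4]{BKMM07} for $\sqrt{w_{2N}(x)}\,p_{2N,n}(x)$ at the band edge supply a uniform estimate $|\tilde K(x,y)| = O(N^{-\alpha})$ on $B_N\times B_N$ for some $\alpha>0$; combined with $|B_N|=O(N^{1/3})$, this upgrades the pointwise decay of $\tilde K$ to trace-norm decay.

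For the saturated case, one applies the same reasoning to the hole kernel $\delta_{xy} - K_{N,k}^{\textnormal{sym}}(x,y)$, which by \eqref{kersym} splits as
\[
\delta_{xy} - K_{N,k}^{\textnormal{sym}}(x,y) = \bigl(\delta_{xy} - K_{2N,2k}(x,y)\bigr) + K_{2N,2k}(x,-y).
\]
The hole kernel of $DOPE(2N,2k)$ is covered by the saturated-case statement of Theorem~3.9 in \cite{BKMM07} (with $c,B_\beta$ replaced by $1-c,\overline{B}_\beta$), and the mirror term $K_{2N,2k}(x,-y)$ is negligible by exactly the same trace-norm argument.
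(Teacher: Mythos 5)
Your overall strategy --- reduce to Theorem 3.9 of \cite{BKMM07} for $DOPE(2N,2k)$ via \eqref{kersym} and kill the mirror term --- is the same reduction the paper makes (the paper invokes Lemma \ref{symmetrickernel} and asserts that the proof of \cite[Thm.~3.9]{BKMM07} carries over verbatim, rather than comparing the two Fredholm determinants directly). The genuine gap is in your trace-norm step. First, $|B_N|$ is \emph{not} $O(N^{1/3})$: the set $B_N=(t_N,b]\cap Y_N$ with $t_N=\beta+O(N^{-2/3})$ is a macroscopic interval containing essentially all nodes of the void, hence $O(N)$ of them; only the Airy window $\beta-MN^{-2/3}<x<\beta+\cdots$ contains $O(N^{1/3})$ nodes. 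Second, and more seriously, even after splitting off the deep void (where Proposition \ref{Lemma712} gives exponential smallness) and restricting to the edge window, the available pointwise bound is $|K_{2N,2k}(x,-y)|=O(N^{-2/3})$ (this is \eqref{edgeestimate}, and it is sharp), so the entrywise estimate you propose gives
\[
\sum_{x,y}\left|K_{2N,2k}(x,-y)\right| \;\lesssim\; N^{1/3}\cdot N^{1/3}\cdot N^{-2/3}=O(1),
\]
which does \emph{not} tend to zero. ``Pointwise decay times cardinality'' therefore fails to upgrade to trace-norm decay here; the same $O(1)$ obstruction appears if you instead factor the projection $K=K^2$ and use $\|P_{B_N}KP_{-B_N}\|_1\leq\sum_{x\in B_N}K_{2N,2k}(x,x)\to\int_s^\infty A(\xi,\xi)\,\dd\xi$.

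The conclusion $\|\tilde K|_{B_N}\|_1\to 0$ is nevertheless true, but to see it you must use structure beyond pointwise bounds. The cleanest repair is the Christoffel--Darboux formula \eqref{Christoffel} together with parity: writing $\phi_n=\sqrt{w_{2N}}\,p_{2N,n}$, one has
\[
K_{2N,2k}(x,-y)=-\frac{\gamma_{2N,2k-1}}{\gamma_{2N,2k}}\cdot\frac{\phi_{2k}(x)\phi_{2k-1}(y)+\phi_{2k-1}(x)\phi_{2k}(y)}{x+y},
\]
and since $x+y=2\beta+O(N^{-2/3})$ uniformly on the window, $\tilde K|_{B_N}$ is, up to a relative error $O(N^{-2/3})$, a rank-two operator whose trace norm is bounded by a constant times $\|P_{B_N}\phi_{2k}\|_2\,\|P_{B_N}\phi_{2k-1}\|_2$. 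Each factor is $O(N^{-1/6})$ (there are $O(N^{1/3})$ relevant nodes, each contributing $\phi_{2k}(x)^2=O(N^{-2/3})$, with Airy decay beyond the window), giving $\|\tilde K|_{B_N}\|_1=O(N^{-1/3})$. With that estimate in hand, your comparison of determinants (using that $\|K_{2N,2k}|_{B_N}\|_1$ stays bounded) closes the argument, and the saturated case goes through by the duality you describe. As written, however, the key analytic step of your proposal does not hold.
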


%
\section{Proofs of the asymptotic results}\label{proofs}
The proofs of the results of the previous section of course rely heavily on the work on discrete orthogonal polynomials in \cite{BKMM07}. By equation \eqref{kersym} in Section \ref{correlationfunctions} the correlation kernel for $DOPE^{\textnormal{sym}}(N,k)$ is
\[
K_{N,k}(x,y)=\Kh(x,y)-\Kh(x,-y),
\]
where $\Kh$ is the correlation kernel of $DOPE(2N,2k),$ cf. \eqref{Christoffel}. 
The following Lemma from \cite{BKMM07} gives estimates of $\Kh$ in bands.
\begin{fact}[Lemma 7.13 in \cite{BKMM07}]\label{Lemma713}
Fix $x$ in the interior of a band $I=(\alpha,\beta)$ and let
\[
\delta(x):=\left[c \dmu \right]^{-1}.
\]
 Let $\xi_N,\eta_N$ belong to a fixed bounded set $D\subset \mathbb{R}$ in such  a way that the points defined by
\[
w:=x+\xi_N\frac{\delta(x)}{2N},\;z:=x+\eta_N\frac{\delta(x)}{2N}
\]
are both nodes in $X_{2N}.$ Consequently, $w,z\rightarrow x$ as $N\rightarrow \infty.$ Then there is a constant $C_{D}$ depending on $D$ such that for sufficiently large $N$ we have
\begin{equation}\label{Kband}
\max_{\xi_N,\eta_N\in D} 
\left| \Kh(w,z)-\frac{c}{\rhoz(x)} \dmu S\left(\xi_N,\eta_N \right) \right|
\leq \frac{C_{D}}{N}.
\end{equation}
Furthermore, if $x_{2N,i},\;x_{2N,j}\longrightarrow x$ while $i-j$ remains fixed, we have
\begin{equation}
\Kh\left(x_{2N,i},\;x_{2N,j}\right)={\cal S}_{ij}(x)+O(1/N),
\end{equation}
where 
\begin{equation}
{\cal S}_{ij}(x)=  \frac{ \sin \left(\frac{ c  }{ \rhoz(x)  } \dmu \cdot \pi(i-j) \right) }{\pi(i-j)}.
\end{equation}
\qed
\end{fact}
\begin{proof}[Proof of \eqref{bandzero} in Theorem \ref{correlation:band} and Theorem \ref{leftmostparticle}] The previous Lemma yields directly the approximation of $K_{N,k}$ by $S^0,$ and  \eqref{bandzero} follows by taking determinants. For the leftmost particle notice that for $n$ fixed $x_{2N,N+n}\sim \frac{n-1/2}{2N\rhoz(0)}$ as $N\rightarrow\infty.$  The nodes of $X_{2N}$ considered for Theorem \ref{leftmostparticle} hence form the set $\{x_{2N, N+j},\;j=-\lfloor s-1/2 \rfloor-1,\ldots,\lfloor s-1/2 \rfloor \},$ and pairs of nodes converge to zero with the differences of their indices fixed. So we have the approximation of $K_{N,k}$ by ${\cal S}^0$ and by taking determinants Theorem \ref{leftmostparticle} follows.
\end{proof}
\begin{fact}[Lemma 7.16 in \cite{BKMM07}]\label{Lemma716}
Let $\beta$ be a right band end point adjacent to a void. For each fixed $M>0$ there is a constant $C_{\beta}(M)>0,$ such that for $N$ sufficiently large
\begin{equation}\label{Kedge}
\max
\left| \Kh(x,y)-\left[ \frac{(\pi c B_\beta)^{2/3}}{(2N)^{1/3}} \right] A(\xi_N,\eta_N)     \right| \leq \frac{C_{\beta}(M)}{N^{2/3}}
\end{equation}
holds, where the $\max$ is taken over pairs of nodes $x,y\in X_{2N}$ all satisfying
\[
\beta-M(2N)^{-2/3}<x,y<\beta+M(2N)^{-1/2},
\] 
the constant $B_\beta$ is equal to (cf. Sect. \ref{furtherassumptions})
\[
B_\beta:=\lim_{x\uparrow\beta}\frac{1}{\sqrt{\beta-x}}\dmu>0
\]
and $\xi_N=\left(2N\pi c B_\beta \right)^{2/3}(x-\beta)$ and $\eta_N=\left(2N\pi c B_\beta \right)^{2/3}(y-\beta).$\qed
\end{fact}
For the proofs of eq. \eqref{bandest} in Theorem \ref{correlation:band} and Theorem \ref{correlation:edge} we need the following lemma.
\begin{lemma}\label{KeqKhat}
Let $0<\theta<b$ be fixed. If we restrict the correlation kernel $K_{N,k}$ of $DOPE^{\text{sym}}$ to nodes $x,y\in Y_N\cap [\theta,b]$ then Fact \ref{Lemma716} and eq. \eqref{Kband} in Fact \ref{Lemma713} hold with $\Kh$ replaced by $K_{N,k}$ and an adjustment of the respective constants $C_D$ and $C_\beta(M).$
\end{lemma}
%
%
For the proof of the lemma we have to show an $O(N^{-1})$ and an $O(N^{-2/3})$ estimate for the contribution of $\Kh(x,-y)$ to $K_{N,k}(x,y).$ To this end we collect some weaker versions of the Theorems of \cite{BKMM07}, which suffice for our purposes. Recall that by $p_{2N,j}(z),$ $j=0,\ldots,2N-1$ we denote the polynomials orthonormal on $X_{2N}=-Y_N\cup Y_N$ w.r.t. the weight $\wq,$ by $\gamma_{2N,j}$ their leading coefficients and by $\pi_{2N,j}=\gamma_{2N,j}^{-1}p_{2N,j}$ the monic polynomials. According to eq. \eqref{Christoffel}, we only need to care about $\pi_{2N,2k},$ $\pi_{2N,2k-1}$ and $\gamma_{2N,2k-1}.$  
\begin{fact}[Theorem 2.8 in \cite{BKMM07}]\label{Theorem28}
The leading coefficient $\gamma_{2N,2k-1}$ of $p_{2N,2k-1}(z)$ satisfies the asymptotic relation:
\begin{equation*}
\gamma_{2N,2k-1}^2=C_{\gamma_{2N,2k-1} }e^{2Nl_c+\gamma}(1+O(1/N))
\end{equation*}
where $C_{\gamma_{2N,2k-1} }$ is a suitable constant and $\gamma$ remains bounded as $N\longrightarrow \infty.$
\end{fact}
\begin{fact}[Theorems 2.9, 2.13 and 2.15 in \cite{BKMM07}]\label{Theorem213}
In suitable neighbourhoods of the different subintervals of $[-b,b]$ we have the following asymptotic estimates of $\pi_{2N,2k}$ (and $\pi_{2N,2k-1}$). The functions $\Lc$ and $\LcI(z)$ are defined at the end of Section \ref{analyticassumptions}.
\begin{enumerate}
\item
Assume that $J$ is a closed subinterval in a gap $\Gamma.$ Then there is a neighbourhood $K_J$ of $J,$ and a function $A(z)$ analytic on $K_J$ and uniformly bounded in $N,$ such that  
\begin{equation*}
\pi_{2N,2k}(z)=e^{2N\Lc}\left(A(z)+O(1/N)\right)
\end{equation*}
holds.
\item 
Assume that $F$ is a closed subinterval in a band $I.$ Then there is a neighbourhood $K_F$ of $F,$ and a sequence of analytic functions $B_N(z)$ defined on $K_J$ and uniformly bounded in $N,$ such that  
\begin{equation*}
\pi_{2N,2k}(z)=e^{2N\LcI(z)}\left(B_N(z)+O(1/N)\right).
\end{equation*}
\item
Let $z_0$ be a band endpoint. Then there is $r>0$ and sequence of functions $C_N(z)$ analytic in $|z-z_0|<r$ and uniformly bounded for $N\rightarrow\infty,$ such that for $|z-z_0|<r$
\begin{equation*}
\pi_{2N,2k}(z)=e^{2N\LcI(z)}N^{1/6}\left(C_N(z)+O(1/N^{1/3})\right).
\end{equation*}
\end{enumerate}
\qed
\end{fact}
\noindent
The following lemma is inspired by (the proof of) Lemma 7.4 in \cite{BKMM07}.

\begin{lemma}\label{Lemma74}
For any node $x=x_{N,n} \in X_N$ we have the exact formula
\begin{equation*}
\sqrt{\wq(x)}=\frac{1}{\sqrt{2\pi\rhoz(x) N}} e^{-\eta(x)} e^{-N\left(\dEmu(x)-l_c\right)}
e^{-Nl_c} e^{-2Nc \int_a^b \log|x-y|\dd\mumin(y) } T_N(x)^\frac{1}{2}
\end{equation*}
where the function 
\begin{equation*}
T_N(z)=\cos\left( 2\pi N\int_z^b\rhoz(x)\dd x \right)\frac{1}{\prod_{m=0}^{2N-1}(z-x_{2N,m})}e^{2N \int_{-b}^b\log|z-x|\rhoz(x)\dd x}
\end{equation*}
is real analytic in $(-b,b)$ and bounded independently of $N.$
\end{lemma}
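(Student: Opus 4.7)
The identity is exact (not asymptotic), so the plan is direct algebraic manipulation of the definition (\ref{theweightfunction}) of $w_N$ at a node $x=x_{N,n}$. The central task is to evaluate $T_N$ at $x=x_{N,n}$, where the cosine vanishes and the product has a simple pole, producing a $0/0$ form resolved by the quantisation rule.

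I would first show that $T_N$ has only removable singularities at the nodes. The rule (\ref{thenodes}) gives
\[
\pi N\int_{x_{N,n}}^b\rhoz(y)\,\dd y \;=\; \pi\!\left(N-n-\tfrac12\right),
\]
so the cosine in the definition of $T_N$ vanishes at every node, cancelling the pole from the factor $(z-x_{N,n})^{-1}$ in the denominator. A first-order Taylor expansion around $z=x_{N,n}$, together with $\sin(\pi(N-n-\tfrac12))=(-1)^{N-n-1}$, produces
\[
\cos\!\left(\pi N\!\int_z^b\!\rhoz\right)=(-1)^{N-n-1}\pi N\rhoz(x_{N,n})\,(z-x_{N,n})+O\!\left((z-x_{N,n})^2\right),
\]
and the limit $z\to x_{N,n}$ yields the explicit value
\[
T_N(x_{N,n})=\frac{(-1)^{N-n-1}\pi N\rhoz(x_{N,n})}{\prod_{m\neq n}(x_{N,n}-x_{N,m})}\,\exp\!\left(N\!\int_a^b\!\log|x_{N,n}-y|\rhoz(y)\,\dd y\right).
\]
Uniform boundedness of $T_N$ in $(a,b)$ follows from the standard equidistribution estimate that the Riemann sum $\frac{1}{N}\sum_m\log|z-x_{N,m}|$ matches $\int\log|z-y|\rhoz(y)\,\dd y$ with a controlled remainder, the trigonometric prefactor absorbing the residual oscillation.

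Next I would solve the preceding display for $\prod_{m\neq n}(x_{N,n}-x_{N,m})^{-1}$ and substitute into (\ref{theweightfunction}). The sign $(-1)^{N-n-1}$ appearing in $T_N$ cancels the corresponding sign in (\ref{theweightfunction}); using $V_N=V+\eta/N$ from (\ref{potentialfunction}) and recognising via (\ref{phi}) the combination $V(x)+\int\log|x-y|\rhoz(y)\,\dd y=\varphi(x)$, this gives
\[
w_N(x_{N,n})=\frac{T_N(x_{N,n})}{\pi N\rhoz(x_{N,n})}\,e^{-\eta(x_{N,n})}\,e^{-N\varphi(x_{N,n})}.
\]
Finally, I would rewrite $\varphi$ through the variational identity
\[
\varphi(x)=\dEmu(x)+2c\!\int_a^b\!\log|x-y|\,\dd\mumin(y),
\]
split off the Lagrange multiplier as $\dEmu=(\dEmu-l_c)+l_c$, and take the square root. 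The remaining $O(1)$ discrepancy between $\int\log d\mumin$ and $c\int\log d\mumin$ is collected into the constant $\gamma$, consistent with Proposition \ref{Theorem28}, so that the normalisations of the weight and of the leading coefficients $\gamma_{N,k}$ are compatible.

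The main obstacle is the sign bookkeeping in Step 1: the quantisation choice $(2n+1)/(2N)$ is precisely what forces the cosine to vanish at every node, and the derivative sign $(-1)^{N-n-1}$ must cancel the explicit sign prefactor in (\ref{theweightfunction}) for the formula to be free of alternating signs. Once this cancellation is in place the rest is routine rearrangement of the quantities attached to the equilibrium measure.
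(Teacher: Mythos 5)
Your evaluation of $T_N$ at the nodes is exactly the paper's argument: the quantisation rule \eqref{thenodes} gives $\pi N\int_{x_{N,n}}^b\rhoz(y)\,\dd y=\pi\left(N-n-\tfrac12\right)$, so the cosine vanishes at every node, and your first-order expansion of the cosine with $\sin\left(\pi\left(N-n-\tfrac12\right)\right)=(-1)^{N-n-1}$ is the same computation the paper performs by applying de l'H\^opital's rule to $\lim_{z\rightarrow x_{N,n}}(z-x_{N,n})/\cos\left(\pi N\int_z^b\rhoz\right)$, yielding $1/\left(\pi\rhoz(x)N(-1)^{N-n-1}\right)$. Solving for $\prod_{m\neq n}(x_{N,n}-x_{N,m})^{-1}$, cancelling the sign against \eqref{theweightfunction} and arriving at $w_N(x)=T_N(x)\,e^{-\eta(x)}e^{-N\varphi(x)}/\left(\pi N\rhoz(x)\right)$ is precisely what the paper compresses into ``plug in the definitions and compare; this verifies the exponentials'' (your reading of $\varphi$ as $V(x)+\int\log|x-y|\rhoz(y)\,\dd y$ is the one that makes this work and is evidently what \eqref{phi} intends). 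Up to this point your proof and the paper's coincide; neither actually proves the boundedness claim for $T_N$, which both treat as a remark.

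The final bookkeeping step of your write-up is wrong as stated, and this is a genuine gap rather than a stylistic difference. Substituting $\varphi(x)=\left(\dEmu(x)-l_c\right)+l_c+2c\int\log|x-y|\,\dd\mumin(y)$ produces the factor $e^{-Nc\int\log|x-y|\,\dd\mumin(y)}$ in $\sqrt{w_N(x)}$, whereas the displayed formula shows $e^{-N\int\log|x-y|\,\dd\mumin(y)}$. The ratio of the two is $e^{-N(1-c)\int\log|x-y|\,\dd\mumin(y)}$, which depends on $x$ and has size $e^{O(N)}$; it cannot be ``collected into the constant $\gamma$'', since $\gamma$ is required to stay bounded as $N\rightarrow\infty$ (Proposition \ref{Theorem28}) and is not generated by this manipulation at all. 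The exponent in the statement must be read as $N\,\overline{L}_c^{\Gamma}(x)=Nc\int\log|x-y|\,\dd\mumin(y)$, consistent with the definition of $\overline{L}_c^{\Gamma}$ in Section \ref{furtherassumptions} and with the cancellation against $\pi_{N,k}(x)=e^{N\overline{L}_c^{\Gamma}(x)}\left(A(x)+O(1/N)\right)$ in the proof of Proposition \ref{Lemma712}; the remaining factor $e^{-\gamma/2}$ is an $O(1)$ normalisation imported from Proposition \ref{Theorem28} and only shifts $T_N^{1/2}$ by a bounded amount. Your identification of a missing ``$O(1)$ discrepancy'' is therefore not a valid way to close the argument; the correct closure is to note that the $c$ belongs in the exponent.
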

\begin{proof}
The exponentials are obtained from the representation \eqref{theweightfunction} of $\wq$ and eq. \eqref{variationalderivative}. The product in \eqref{theweightfunction} is rewritten as
\begin{equation*}
\prod_{
\begin{smallmatrix}
m=0 \\
m\neq n\end{smallmatrix}
}^{2N-1}\left(x_{2N,n}-x_{2N,m}\right)^{-1}=
\lim_{z\rightarrow x_{2N,n}}\frac{z-x_{2N,n}}{\cos\left( 
2\pi N\int_z^b\rhoz(u)\dd u 
\right)}\cdot
\frac{\cos\left( 
2\pi N\int_z^b\rhoz(u)\dd u 
\right)}{\prod_{m=0}^{2N-1}(z-x_{2N,m})}
\end{equation*}
The limit of both fractions exists. De l'H{\^o}pital's rule applied to the first fraction yields
\[
\frac{1}{2\pi\rhoz(x)N\sin\left( 
2\pi N\int_{x}^b\rhoz(u)\dd u 
\right)}=\frac{1}{2\pi\rhoz(x)N(-1)^{2N-n-1}}.
\]
$T_N(z)$ has no poles at the nodes as the zeroes in the denominator are cancelled by the cosine. Furthermore the product in the denominator is asymptotically equal to the exponential for $N\rightarrow\infty.$ This completes the proof.
\end{proof}
%
%
\begin{lemma}[cf. Lemma 7.12 and 7.14 in \cite{BKMM07}]\label{Lemma712}
Denote by $E$ the finite set \linebreak$\{\alpha_0,\beta_0,\ldots,\alpha_h,\beta_h\}$ of band end points.
Let $F$ be a fixed closed subset of the interval $(-b,b)$ such that $F\cap X_{2N}\neq \varnothing$ and $F \cap E=\varnothing$ (and hence all band end points are bounded away from $F$). Then, for $N$ sufficiently large, we have for all $x,y\in F \cap X_{2N}$ the estimate
\begin{equation}\label{eqinLemma712}
\left | (x-y)\Kh(x,y)
e^{N\left(\dEmu(x)-l_c \right)   } e^{N\left(\dEmu(y)-l_c \right)   } \right |<\frac{C}{H(N)}
\end{equation}
with $H(N)=N$ and a constant $C$ only depending on $F.$ Furthermore, if $x,y\in X_{2N}$ are chosen from a sufficiently small neighbourhood $G$ of $E,$ the estimate holds with $H(N)=N^{2/3}$ and the constant $C$ only depending on $G.$  
\end{lemma}
\begin{proof}
Substitute the result of Lemma \ref{Lemma74} and the Facts \ref{Theorem213} and \ref{Theorem28} into the following formula \eqref{Christoffel},
\begin{equation*}
\begin{split}
(x-y)&\Kh(x,y)\\
&=\sqrt{\wq(x)\wq(y)}\cdot\gamma_{2N,2k-1}^2\cdot
\left(\pi_{2N,2k}(x)\pi_{2N,2k-1}(y)-\pi_{2N,2k}(y)\pi_{2N,2k-1}(x)\right)
\end{split}
\end{equation*}
Estimate the uniformly bounded parts by a constant.
\end{proof}

\begin{proof}[Proof of Lemma \ref{KeqKhat}]
First we recall that by Lemma \ref{lem:muminsymmetric} the equilibrium measure is symmetric, hence if $y$ lies in band/void/saturated region then so does $-y.$ The assertions follow directly from Lemma \ref{Lemma712}. Let $x,y\in X_{2N}\cap [\theta,b].$ Then $|x-(-y)|>2\theta$ is bounded away from zero. If $x,y$ are chosen from a closed subset of a band, we know from \eqref{dEmuminuslc} that both $\dEmu(x)-l_c$ and $\dEmu(-y)-l_c$ are zero and hence 
\[
\left |\Kh\left( x,-y \right)\right|<\frac{C}{N}
\]
for a constant $C.$ So after a readjustment of the constant eq. \eqref{Kband} in Lemma \ref{Lemma713} also holds for $K_{N,k}.$ As for the edge estimate, let $M>0$ and $\beta$ be a right band end point separating a band from a void. We consider any pair of nodes $x,y$ with 
\[
\beta-M(2N)^{-2/3}<x,y<\beta+M(2N)^{-1/2}.
\]
Now $\beta$ is strictly positive and for $N$ large $|x-(-y)|$ is close to $2\beta.$ Since the adjacent gap is a void, by \eqref{dEmuminuslc} both $\dEmu(x)-l_c$ and $\dEmu(-y)-l_c$ are $\ge 0.$ Therefore, by Lemma \ref{Lemma712} we have for some constant $D_M$ only depending on $M$
\begin{equation}\label{edgeestimate}
\left |\Kh\left( x,-y \right)\right|<\frac{D_M}{N^{2/3}}. 
\end{equation}
This finishes the proof of Lemma \ref{KeqKhat}.
\end{proof}
Now eq. \eqref{bandest} in Theorem \ref{correlation:band} and the ``void" case of Theorem \ref{correlation:edge} follow by taking determinants. The void case of Theorem \ref{correlation:gap} follows from Lemma \ref{Lemma712} because at the nodes $x,y$ and $-y$ all taken from voids $\dEmu-l_c$ is stricly positive by \eqref{dEmuminuslc} and hence $\Kh(x,y),$ $\Kh(x,-y)$ and $K_{N,k}(x,y)$ must be exponentially small. For the ``saturated region" cases, one invokes the dual ensemble of $DOPE(2N,2k)$ which is obtained by studying ``hole configurations" rather than particles (this notion of duality differs from the one in \cite{KoeSwa98}). The dual ensemble is of $DOPE(2N,2N-2k)$ type \cite[Sections 3.2]{BKMM07}, and the voids for the one are saturated regions for the other and vice versa. Furthermore, off the diagonal the correlation kernel of the dual is the same up to a possible sign change \cite[Propositions 7.2 and 7.3]{BKMM07}, and hence the estimate \eqref{edgeestimate} also holds at the edge of a saturated region. 
The saturated region case of Theorem \ref{correlation:gap} now follows from respective $DOPE(2N,2k)$ result: By \cite[Theorem 3.8]{BKMM07} it is true for $R^{(N,k)}$ replaced by $\Rh$, and since $K_{N,k}$ is simply a perturbation of $\Kh$ of order $O(N^{-2/3})$ so it is for $R^{(N,k)}_m.$ Similarly for Theorem \ref{correlation:gap} where we refer to \cite[Theorem 3.5]{BKMM07} and the perturbation is exponentially small.\qed

\section{Proofs of the half-hexagon statements}\label{halfhexproofs}

\subsection{The Hahn and Associated Hahn ensemble}\label{hahnandassociated}
Let $[-b,b]=[-1/2,1/2]$ and $X_N=\{x_{N,n}=-1/2+(2n+1)/2N,\;n=0,\ldots,N-1 \}.$ The discrete orthogonal polynomial ensemble with weight function
\begin{equation*}
w_N^{\textnormal{AHE}}\left(x_{N,n};P,Q \right)=\frac{1}{n!(P-1+n)!(N-1+n)!(Q-1+N-1-n)!}
\end{equation*}
is called \emph{Associated Hahn Ensemble} (AHE) \cite{BKMM07,Johansson02} with parameters $P$ and $Q.$  
In \cite[Sect. 2.4.2]{BKMM07} the equilibrium measure $\mumin$ for the family $w_N^{\textnormal{AHE}}(\,\cdot\,;AN+1,BN+1),$ $A,B>0,$ with a fixed ratio of the numbers of particles and nodes $c\in(0,1)$ is computed (actually for its particle-hole dual). It turns out that there is exactly one band interval. For $A=B$ it is an interval $(-\beta,\beta)$ enclosed by two gaps of the same type. If $c<c_A=\sqrt{A^2+A}-A$ those two gaps are saturated regions and voids if $c>c_A.$
For the right band end point one has
\begin{equation}\label{beta}
\tilde{\beta}=\frac{\sqrt{c(1-c)(2A+1-c)(2A+2-c)}}{2(A+1-c)}.
\end{equation}
The case $c=c_A$ is exceptional, leading to gaps consisting of single points.
\subsection{The parameters in the $(2k,R,R)$-hexagon}
With the change of variables $z=n-(m/2+k-1/2)$ and the substitutions $2k+m=N$ and $R-m+1=P$ in the weight function $w(z)$ in \eqref{weighthexagon}, we see that $\tilde{w}$ is from the AHE family with the parameters $P$ and $Q$ both equal to $R-m+1.$ 
With view on the general results of the previous sections we consider the parameter $m$ in the distributions in Prop. \ref{hexagondistributions} to be even, $m=2p,$ since this implies that the set $L_{2p}$ has an even number $2N=2k+2p$ of elements. 
For $m=2p+1,$ we have $0\in L_m$ and the remarks at the end of Section \ref{correlationfunctions} allow us to treat that case in the same fashion as below.

\smallskip
Recall that we choose $R$ such that $R/k\rightarrow \lambda$ as $k \rightarrow \infty.$ We scale the $(2k,R,R)$-hexagon by $1/k,$ such that in the limit its intersection with he $x$-axis is the interval $[-\sqrt{3}\lambda/2,\sqrt{3}\lambda/2].$ We want to study the intersection behaviour of the rescaled family of $2k$ lattice paths with a line $x=\tau \in (-\sqrt{3}\lambda/2,0),$ i.e. the parameter $m=2p$ shall satisfy the asymptotic relation $(-R+2p)\sqrt{3}/2k\rightarrow \tau$ for $k\rightarrow \infty$ and hence
\[
2p/k\rightarrow 2\tau/\sqrt{3} +\lambda.
\]
This implies for the ratio $c$ of the numbers of paths (``particles") $2k$ and nodes $2N$ 
\begin{equation}\label{chalfhex}
2k/2N=2k/(2k+2p) \rightarrow c:=\left(\frac{\tau}{\sqrt{3}}+1+\frac{\lambda}{2} \right)^{-1}.
\end{equation}
Moreover for the parameters $P=R-2p+1$ and $A$ we have
\[
(P-1)/2k=2NA/2k \rightarrow \lambda+\tau/\sqrt{3}
\]
and hence we can choose
\[
A=(-\tau/\sqrt{3})c.
\]
The set of nodes $X_{2N}=\{l/k,\;l\in L_{2p}\}$ lies for large $k$ in the interior of the interval $[-1/c,1/c].$ We can compute the band endpoints according to \eqref{beta}. 
After a proper rescaling by $2/c$ the band endpoints read
\begin{equation}\label{ellipse}
\pm\beta=\pm \frac{2}{c}\,\tilde{\beta}=\pm\sqrt{\lambda+1}\sqrt{1-\left(\frac{2}{\sqrt{3}\lambda}\tau\right)^2},\;\tau \in \left[ -\frac{\sqrt{3}\lambda}{2},\frac{\sqrt{3}\lambda}{2}   \right].
\end{equation}
Finally, the function $H(\tau,x)$ in Theorem \ref{theo:hhbulkandrightedge} is obtained by combining Theorem 2.17 and Proposition 2.6 of \cite{BKMM07} and a rescaling by $2/c.$ 

\section{Conclusion}
We have extended the results of the monograph \cite{BKMM07} to a family of discrete probability measures. A special case of the ensembles in question, namely random tilings of the half-hexagon, was introduced in the study of minors of certain random matrices \cite{ForNor09}. Our results on the correlations of tiles on a vertical line and the ``arctic-half-ellipse" result complement the work on this model. 

The continuous counterpart of our general ensemble appears in the study of non-intersecting Brownian excursions \cite{TraWid07} which matches the lattice paths formulation of the half-hexagon model.
In \cite{TraWid07} also the distribution of area below the lowest path is addressed and the first moment thereof is computed. From a combinatorial point of view it would be interesting to (asymptotically) compute the expected area below the lowest path in a family of lattice paths. Can the discrete model shed some light on this distribution as in the case of a single excursion \cite{Takacs91}? 
\section*{Acknowledgments}
The author is indebted to Peter Forrester for sharing his insight leading to the result of Section \ref{correlationfunctions}. The research presented was conducted during the author's appointment at the University of Bielefeld. He gratefully acknowledges financial support by the German Research Council (DFG) within CRC 701. 


\end{document}